\numberwithin{equation}{section}
\newtheorem{theorem}{Theorem}[section]
\newtheorem{lemma}[theorem]{Lemma}
\newtheorem{corollary}[theorem]{Corollary}
\theoremstyle{definition}
\theoremstyle{remark}
\newtheorem{remark}[theorem]{Remark}
\numberwithin{equation}{section}
\newcommand{\splitinf}[2]{\ensuremath{\inf_{\overset{\scriptstyle{#1}}{\scriptstyle{#2}}} }}
\newcommand{\diam}{\mathop{\mathrm{diam}}}
\newcommand{\tr}{\triangle}
\newcommand{\skw}{\mathop{\mathrm{skw}}}
\newcommand{\veps}{\varepsilon}
\def\d{\partial}
\newcommand{\Brg}{B_r^{\mathrm{grad}}}
\newcommand{\vpi}{\varPi}
\newcommand{\vpig}{\varPi_r^\mathrm{grad}}
\newcommand{\vpid}{\varPi_{p+2}^\mathrm{div}}
\newcommand{\vpidS}{\varPi_{p+2}^{(\mathrm{div},\mathbb{S})}}
\newcommand{\cpi}{C_{\scriptscriptstyle{\varPi}}}
\newcommand{\uu}{{\scriptstyle{\mathscr{U}}}}
\newcommand{\uuh}{\uu_{\!h}}
\newcommand{\vv}{{\scriptstyle{\mathscr{V}}}}
\newcommand{\vvv}{{\scriptscriptstyle{\mathscr{V}}}}
\newcommand{\ww}{{\scriptstyle{\mathscr{W}}}}
\newcommand{\www}{{\scriptscriptstyle{\mathscr{W}}}}
\newcommand{\bb}{{\scriptstyle{\mathscr{B}}}}
\newcommand{\xx}{{\scriptstyle{\mathscr{X}}}}
\newcommand{\uuhr}{\uu_{\!h}^{r}}
\newcommand{\Tr}{\smash[t]{T^{r}}}
\newcommand{\Vr}{\smash[t]{V^{r}}}
\newcommand{\Vhr}{\smash[t]{V_h^{r}}}
\newcommand{\gpi}{\varPi_{\mathrm{grad}}}
\newcommand{\dpi}{\varPi_{\mathrm{div}}}
\newcommand{\Frechet}{Fr{\'{e}}chet}
\newcommand{\Babuska}{Babu{\v{s}}ka}      
\newcommand{\Poincare}{Poincar{\'{e}}}
\newcommand{\Epgrad}{E^{\grad}_p}
\newcommand{\Epdiv}{E^{\dive}_p}
\newcommand{\Hdiv}[1]{H(\mathrm{div},#1)}
\newcommand{\ip}[1]{\langle {#1} \rangle}
\newcommand{\om}{\varOmega}
\newcommand{\oh}{{\varOmega_h}}
\newcommand{\vx}{{\vec{x}}}
\newcommand{\sgn}{\hat{\sigma}_n}
\newcommand{\sgnh}{\hat{\sigma}_{n,h}}
\newcommand{\dive}{\ensuremath{\mathop{\mathrm{div}}}}
\newcommand{\grad}{\ensuremath{\mathop{\mathrm{grad}}}}
\newcommand{\VVV}{\mathbb{V}}
\newcommand{\RRR}{\mathbb{R}}
\newcommand{\KKK}{\mathbb{K}}
\begin{document}

\title{An analysis of the practical DPG method}

\author{J.~Gopalakrishnan}
\address{Portland State University,  PO Box 751, Portland, OR 97207-0751} 
\email{gjay@pdx.edu}

\author{W.~Qiu}
\address{The Institute for Mathematics and its Applications, 
University of Minnesota, Minneapolis, Minnesota 55455}
\email{qiuxa001@ima.umn.edu}
\thanks{Corresponding author: Weifeng Qiu (qiuxa001@ima.umn.edu)}

\thanks{This work was partly supported by the NSF under grants DMS-1211635 and
  DMS-1014817. The authors gratefully acknowledge
  the collaboration opportunities
  provided by the IMA (Minneapolis) during their 2010-11 program}

\begin{abstract}
  We give a complete error analysis of the Discontinuous
  Petrov Galerkin (DPG) method, accounting for all the approximations
  made in its practical implementation.  Specifically, we consider the
  DPG method that uses a trial space consisting of polynomials of
  degree $p$ on each mesh element. Earlier works showed that there is
  a ``trial-to-test'' operator $T$, which when applied to the trial
  space, defines a test space that guarantees stability. In DPG
  formulations, this operator $T$ is local: it can be applied
  element-by-element. However, an infinite dimensional problem on each
  mesh element needed to be solved to apply $T$. In practical
  computations, $T$ is approximated using polynomials of some degree
  $r > p$ on each mesh element. We show that this approximation
  maintains optimal convergence rates, provided that $r\ge p+N$, where
  $N$ is the space dimension (two or more), for the Laplace
  equation. We also prove a similar result for the DPG method for
  linear elasticity. Remarks on the conditioning of the stiffness
  matrix in DPG methods are also included.
\end{abstract}

\subjclass[2000]{65N30, 65L12}

\keywords{discontinuous Galerkin, Petrov-Galerkin, DPG method, ultraweak
  formulation}

\maketitle

\section{Introduction}

In this paper we prove error estimates for the discontinuous
Petrov-Galerkin (DPG) method applied to the Laplace equation and the equations of
linear elasticity. The approach is applicable more generally to other
equations as well. An error analysis of an ``ideal'' DPG method was
provided in~\cite{DemkoGopal:DPGanl}. Although the ideal method is not
practically implementable, a number of important theoretical tools for
analysis were developed in~\cite{DemkoGopal:DPGanl}.  We extend this
analysis using a few new lemmas to provide a complete analysis of the
fully implementable ``practical'' DPG method. The distinction between
the ideal and practical methods will be clear in the next few
paragraphs.

Both methods are easy to describe in a general context. Suppose we
want to approximate $\uu \in U$ satisfying
\begin{equation}
  \label{eq:1}
  b(\uu,\vv) = l(\vv), \quad \forall \vv \in V.
\end{equation}
Here $U$ is a Hilbert space with norm $\| \cdot\|_U$ and
$V$ is a Hilbert space under an inner product $(\cdot,\cdot)_V$ with 
corresponding norm $\| \cdot \|_V$. (All spaces are over $\RRR$.)  We
assume that the bilinear form $b(\cdot,\cdot): U \times V \mapsto
\RRR$ is continuous and the linear form $l(\cdot): V\mapsto \RRR$ is
also continuous. Define $T: U \mapsto V$ by
\begin{equation}
  \label{eq:T}
  (T\ww,\vv)_V = b(\ww,\vv),\qquad \forall  \vv \in V.
\end{equation}
Then, the DPG approximation to $\uu$, lies in a finite dimensional
trial subspace $U_h \subset U$ (where $h$ denotes a parameter
determining the finite dimension). It satisfies
\begin{equation}
  \label{eq:2}
  b(\uuh,\vv) = l(\vv),\quad\forall \vv \in V_h,
\end{equation}
where $V_h = T(U_h)$.  Since $U_h \ne V_h$ in general, this is a
Petrov-Galerkin approximation. The method~\eqref{eq:2} is the {\em
  ideal DPG method}. The excellent stability and approximation
properties of this method are well
known~\cite{DemkoGopal:2010:DPG2,DemkoGopal:DPGanl}.

The main difficulty of the ideal method is that in order to compute
$\uuh$, one needs a basis for $V_h$, which must be obtained by
applying $T$. This is infeasible, as seen from~\eqref{eq:T}, if $V$ is
infinite dimensional, unless a solution to~\eqref{eq:T} can be written
out in closed form. In certain one-dimensional problems, and in some
multi-dimensional problems like the transport equation, the
application of $T$ can be exactly written out in closed form
(see~\cite{DemkoGopal:2010:DPG1,DemkoGopal:2010:DPG2}). But for the
vast majority of interesting problems, this is not possible.

Yet, one may approximate $T$ by $\Tr$, defined as follows.  Let $\Vr$
be a finite dimensional subspace of $V$ (where $r$ is a parameter
determining the finite dimension). Then $\Tr \ww$ in $\Vr$ is defined
by
\begin{equation}
  \label{eq:Tr}
  (\Tr\ww,\vv)_V = b(\ww,\vv),\qquad \forall  \vv \in \Vr.
\end{equation}
One can then reconsider the DPG method~\eqref{eq:2} with $\Vhr =
\Tr (U_h)$ in place of $V_h$, i.e.,
\begin{equation}
  \label{eq:pract}
    b(\uuhr,\vv) = l(\vv),\quad\forall \vv \in \Vhr.
\end{equation}
This yields an implementable method that is very generally
applicable. We refer to this method as the {\em practical DPG method}.

A serious difficulty still remains when these ideas are applied to
standard variational problems. Namely, one application of $\Tr$ requires
inverting a Gram matrix in the $V$-inner product. This is
prohibitively expensive for most standard variational formulations.
For instance, if $V$ is $H^1(\om)$, where $\om$ is the computational
domain, then inverting the Gram matrix is as expensive as solving the
Laplace's equation.

This difficulty can be overcome by hybridization, as shown in the
earlier DPG
papers~\cite{DemkoGopal:2010:DPG2,DemkoGopal:DPGanl}. Namely, given a
boundary value problem, introducing certain interelement fluxes and
traces as new unknowns, we can design an {\em ultraweak} well-posed
variational formulation involving a space $V$ that contains functions
{\em discontinuous} across mesh element interfaces.  This then implies
that the Gram matrix becomes block diagonal, with one block per mesh
element (since $\Vr$ may now be chosen to be a DG subspace).  The
application of $\Tr$ is thus reduced to an easy block diagonal
inversion, i.e., the action of the operator $\Tr$ is {\em local}.

Such an ultraweak variational formulation has been developed for the
Poisson equation in~\cite{DemkoGopal:DPGanl}, where its wellposedness
is also proved. We will heavily rely on such wellposedness results in
this paper. An ultraweak formulation for the linear elasticity system
is also available now~\cite{BramDemkoGopalQiu:DPGelas}.  Both these
works analyzed the ideal DPG method~\eqref{eq:2} for the respective
ultraweak formulations.  The aim of the present paper is to provide an
error analysis for the corresponding practical DPG
methods~\eqref{eq:pract}.

In the next section we will present an approach to the analysis of the
practical method, continuing in the general context and using the
abstract notations introduced above. In Section~\ref{section_Laplace},
we perform the error analysis for the practical DPG method for the
Laplace equation. We also provide a condition number estimate. In
Section~\ref{sec:elas}, we consider a second example of linear
elasticity and provide an  error analysis.

\section{The approach to analysis} \label{sec:abstract}

The purpose of this section is to point out a simple functional analytic
route to proving the discrete stability of the practical DPG
method~\eqref{eq:pract}.  This discrete stability will follow from a
discrete inf-sup condition on the space~$\Vhr$. However, in
applications, it is often inconvenient to work directly with this
space. We prefer to work with $\Vr$, which will be some standard
polynomial space in most applications (as seen in the examples
later). The next theorem shows that the existence of a Fortin operator
into this more standard space $\Vr$ is a sufficient condition for the
convergence of the practical DPG method.

Before we give the result, let us state the assumptions that we shall
verify for each of our examples. We assume that
  \begin{equation}
    \label{eq:inj2}
    \{ \ww \in U :  \; b(\ww,\vv)  = 0,\; \forall \vv \in V \} = \{ 0 \}
  \end{equation}
   and that there is a positive constant $C_1$  such that 
   \begin{equation}
   \label{eq:equiv2}
    C_1 \| \vv \|_V  \le \sup_{\www\in U} \frac{ b(\ww,\vv) }{ \| \ww \|_U},
    \qquad \forall \vv \in V.
  \end{equation}
Above and throughout, we will tacitly assume that the suprema such as the above are taken over nonzero functions.
Let $C_2 \ge 0$ be such that
\begin{equation}
  \label{eq:b-cnt}
  b(\ww,\vv) \le C_2 \| \ww \|_U \| \vv \|_V,\qquad
  \forall \ww \in U, \; \vv \in V.
\end{equation}
Clearly, such a $C_2$ exists due to the continuity of
$b(\cdot,\cdot)$.  Finally, assume that there exists a linear operator
$\vpi: V \mapsto \Vr$ such that  for all $\vv \in V$, we have
\begin{subequations}
    \label{eq:pi}
    \begin{gather}
      \label{eq:pi-iden}
      b(\ww,\vv - \vpi \vv)
      =0, \qquad\forall \ww \in U_h, 
      \\
      \label{eq:pi-est}
      \| \vpi \vv \|_V \le \cpi \| \vv\|_V.
   \end{gather}
\end{subequations}

\begin{theorem}
  \label{thm:approach}
  Suppose the assumptions~\eqref{eq:inj2}, \eqref{eq:equiv2},
  \eqref{eq:b-cnt}, and~\eqref{eq:pi} hold.  Then the
  problem~\eqref{eq:1} is well-posed and
  \begin{equation}
  \label{eq:quasiopt}
  \| \uu  - \uuhr \|_U 
  \le \,\frac{C_2 \cpi }{C_1 } \,\inf_{\www \in U_h } \| \uu - \ww \|_U.
  \end{equation}
\end{theorem}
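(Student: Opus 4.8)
The plan is to deduce~\eqref{eq:quasiopt} from the abstract Babu\v{s}ka theory of Petrov--Galerkin methods. The two ingredients needed are: (i) a discrete inf-sup condition, namely a positive constant $\gamma$ such that $\gamma\|\ww\|_U \le \sup_{\vv\in\Vhr} b(\ww,\vv)/\|\vv\|_V$ for all $\ww\in U_h$, and (ii) continuity of $b$, which is already given as~\eqref{eq:b-cnt}. Once these are in hand, the generalized C\'ea estimate (Babu\v{s}ka) gives $\|\uu-\uuhr\|_U \le (C_2/\gamma)\,\inf_{\ww\in U_h}\|\uu-\ww\|_U$; together with well-posedness of~\eqref{eq:1} (which follows from~\eqref{eq:inj2} and~\eqref{eq:equiv2} via the Banach--Ne\v{c}as--Babu\v{s}ka theorem, since~\eqref{eq:equiv2} is exactly the inf-sup condition on the infinite-dimensional problem and~\eqref{eq:inj2} the injectivity that upgrades it to bijectivity), this will yield the claim with $\gamma = C_1/\cpi$.

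The heart of the matter is therefore step (i), and this is where the Fortin operator $\vpi$ from~\eqref{eq:pi} enters. I would fix $\ww\in U_h$ and start from the continuous inf-sup bound~\eqref{eq:equiv2} applied to a cleverly chosen $\vv$: specifically, let $\vv = T\ww \in V$ (the ideal test function from~\eqref{eq:T}), so that $(T\ww,T\ww)_V = b(\ww,T\ww)$ gives $\|T\ww\|_V^2 = b(\ww,T\ww)$ and, using~\eqref{eq:equiv2} on $\vv = T\ww$ together with the definition of $T$, one gets $C_1\|T\ww\|_V \le \sup_{\xx\in U} b(\xx,T\ww)/\|\xx\|_U = \sup_\xx (T\xx,T\ww)_V/\|\xx\|_U$. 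Actually the cleaner route is: by~\eqref{eq:equiv2} there exists (a supremizing sequence, hence effectively) $\vv_\ww\in V$ with $C_1\|\vv_\ww\|_V \le b(\ww,\vv_\ww)/\|\ww\|_U$; then apply $\vpi$ to $\vv_\ww$. By~\eqref{eq:pi-iden}, since $\ww\in U_h$, we have $b(\ww,\vpi\vv_\ww) = b(\ww,\vv_\ww)$, and by~\eqref{eq:pi-est}, $\|\vpi\vv_\ww\|_V \le \cpi\|\vv_\ww\|_V$. Hence
\begin{equation*}
  \sup_{\vv\in\Vr}\frac{b(\ww,\vv)}{\|\vv\|_V}
  \ge \frac{b(\ww,\vpi\vv_\ww)}{\|\vpi\vv_\ww\|_V}
  = \frac{b(\ww,\vv_\ww)}{\|\vpi\vv_\ww\|_V}
  \ge \frac{b(\ww,\vv_\ww)}{\cpi\|\vv_\ww\|_V}
  \ge \frac{C_1}{\cpi}\|\ww\|_U,
\end{equation*}
so $\Vr$ (not yet $\Vhr$) satisfies the discrete inf-sup condition with constant $C_1/\cpi$.

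The remaining point is to pass from $\Vr$ to $\Vhr = \Tr(U_h)$; this is a standard feature of the DPG trial-to-test construction. For $\ww\in U_h$ and any $\vv\in\Vr$, the definition~\eqref{eq:Tr} gives $b(\ww,\vv) = (\Tr\ww,\vv)_V$, so $\sup_{\vv\in\Vr} b(\ww,\vv)/\|\vv\|_V = \|\Tr\ww\|_V$ (the sup is attained at $\vv=\Tr\ww\in\Vr$); but also $b(\ww,\Tr\ww) = (\Tr\ww,\Tr\ww)_V = \|\Tr\ww\|_V^2$, whence $\sup_{\vv\in\Vhr} b(\ww,\vv)/\|\vv\|_V \ge b(\ww,\Tr\ww)/\|\Tr\ww\|_V = \|\Tr\ww\|_V = \sup_{\vv\in\Vr} b(\ww,\vv)/\|\vv\|_V \ge (C_1/\cpi)\|\ww\|_U$. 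Thus $\Vhr$ inherits the same inf-sup constant, and step (i) is complete. Combining with~\eqref{eq:b-cnt} through Babu\v{s}ka's lemma gives~\eqref{eq:quasiopt}.

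I expect the only genuinely delicate point to be the bookkeeping around suprema: the statement tacitly restricts suprema to nonzero arguments, and one must make sure the supremizer arguments above are nonzero (e.g. if $\Tr\ww=0$ then $b(\ww,\cdot)\equiv0$ on $\Vr$, which combined with~\eqref{eq:pi-iden} and~\eqref{eq:equiv2} forces $\ww=0$, consistent with injectivity on $U_h$). Everything else is a direct chaining of the four hypotheses, so no serious obstacle is anticipated; the proof is essentially the Fortin-operator argument adapted to the Petrov--Galerkin/DPG setting.
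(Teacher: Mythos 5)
Your overall strategy is the same as the paper's: establish the discrete inf-sup condition on $\Vhr$ by (a) pushing a continuous inf-sup bound through the Fortin operator $\vpi$ to obtain an inf-sup bound with supremum over $\Vr$ and constant $C_1/\cpi$, and (b) observing that the supremum over $\Vr$ equals the supremum over $\Vhr=\Tr(U_h)$ because both equal $\|\Tr\ww\|_V$; then invoke \Babuska's theorem together with~\eqref{eq:b-cnt}. The Fortin step and step (b) are carried out correctly and coincide with the paper's second and third steps.

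The one genuine gap is at the launching point of your step (i): you assert that ``by~\eqref{eq:equiv2} there exists $\vv_\ww\in V$ with $C_1\|\vv_\ww\|_V\le b(\ww,\vv_\ww)/\|\ww\|_U$.'' That claim is the \emph{transposed} inf-sup condition $C_1\|\ww\|_U\le\sup_{\vvv\in V} b(\ww,\vv)/\|\vv\|_V$, whereas~\eqref{eq:equiv2} is the inf-sup condition with the roles of $U$ and $V$ interchanged (infimum over $\vv$, supremum over $\ww$). These are not interchangeable without further argument: \eqref{eq:equiv2} alone can hold while $b(\ww,\cdot)\equiv 0$ for some nonzero $\ww$ (think of $U$ containing a component that $b$ does not see), in which case no such $\vv_\ww$ exists. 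To obtain the transposed condition --- and with the \emph{same} constant $C_1$, which matters because the explicit constant $C_2\cpi/C_1$ in~\eqref{eq:quasiopt} depends on it --- one must combine~\eqref{eq:equiv2} with the injectivity assumption~\eqref{eq:inj2} and argue via the Closed Range Theorem that $B\ww=b(\ww,\cdot)$ is boundedly invertible with $\|B^{-1}\|=\|(B^{*})^{-1}\|$. This is exactly the first step of the paper's proof (its inequality~\eqref{eq:infsup}). You gesture at the right fact when citing Banach--Ne\v{c}as--Babu\v{s}ka for well-posedness, but you never transfer the constant $C_1$ to the transposed direction, and your derivation as written cites only~\eqref{eq:equiv2} for a statement that is false without~\eqref{eq:inj2}. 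Once that step is supplied, your proof is the paper's proof.
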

\begin{proof}
  We apply \Babuska's theory~\cite{Babus70,XuZikat03}. Accordingly, if
  we prove the discrete inf-sup condition
  \begin{equation}
    \label{eq:inf-sup-discrete}
    \frac{C_1}{\cpi} 
    \| \ww \|_U  \le \sup_{\vvv\in \Vhr } \frac{ b(\ww,\vv) }{ \| \vv \|_V},
    \qquad \forall \ww \in  U_h,
  \end{equation}
  then~\eqref{eq:quasiopt} will follow. We
  prove~\eqref{eq:inf-sup-discrete} in three steps, the first two of
  which are fairly standard (but included for readability).

As the first step, we prove that the following inf-sup condition holds:
  \begin{equation}
  \label{eq:infsup}
  C_1 \| \ww \|_U \le 
  \sup_{ \vvv\in V}\dfrac{\vert b(\ww,\vv)\vert}{\Vert \vv \Vert_{V}},
  \qquad
  \forall \ww\in U.
  \end{equation}
  This follows from the other inf-sup condition~\eqref{eq:equiv2}.
  Define a linear operator $B:U\rightarrow V^{*}$ by $
  B\ww=b(\ww,\cdot)\in V^{*},$ for all $\ww\in U.$ It is well
  known~\cite{Yosid95} that (\ref{eq:equiv2}) holds if and only if
  $B^{*}$ is injective and the range of $B^{*}$ is closed in~$U^{*}$.
  Additionally, by (\ref{eq:inj2}), $B$ is injective. Therefore, by
  the Closed Range Theorem, $B^{*}(V)=U^{*}$, so
  $(B^{*})^{-1}:U^{*}\rightarrow V$ exists.  Hence
  $B^{-1}:V^{*}\rightarrow U$ also exists and is continuous. This
  proves that problem~\eqref{eq:1} is {\em well-posed.}
  We obviously also have $\Vert B^{-1}\Vert=\Vert
  (B^{-1})^{*}\Vert=\Vert (B^{*})^{-1}\Vert$, i.e.,
  \begin{equation*}
    \inf_{\www\in U}\sup_{ \vvv\in V}\dfrac{\vert b(\ww,\vv)\vert}{\Vert \ww\Vert_{U}\Vert \vv\Vert_{V}}
    =\Vert B^{-1}\Vert^{-1}=\Vert (B^{*})^{-1}\Vert^{-1}
    =\inf_{ \vvv\in V}\sup_{ \www\in U}\dfrac{\vert b(\ww,\vv)\vert}{\Vert \ww\Vert_{U}\Vert \vv \Vert_{V}},
\end{equation*}
which proves~(\ref{eq:infsup}).

As the second step, we prove the following inf-sup condition.
  \begin{equation}
  \label{eq:infsupVr}
  \frac{C_1}{\cpi} \| \ww \|_U \le 
  \sup_{ \vvv\in \Vr }\dfrac{\vert b(\ww,\vv)\vert}{\Vert \vv \Vert_{V}},
  \qquad
  \forall \ww\in U_h.
  \end{equation}
  Note that this differs from~\eqref{eq:inf-sup-discrete} only in the
  space in which the supremum is sought. To prove~\eqref{eq:infsupVr},
  we use~\eqref{eq:infsup} and assumption~\eqref{eq:pi} as follows:
  \begin{align*}
    C_1 \| \ww \|_U 
    & \le \sup_{ \vvv \in V}  \frac{b(\ww,\vv) }{ \| \vv \|_V }
     \le  \sup_{ \vvv \in V}  \frac{b(\ww,\vpi \vv) }{ \cpi^{-1} \| \vpi \vv \|_V}.
  \end{align*}
  Now, since $\vpi\vv$ is in $\Vr$, the last supremum may be bounded
  by the supremum over all $\Vr$, so we obtain~\eqref{eq:infsupVr}.

As the  third and final step, we prove that
  if $s_1$ is the supremum in~\eqref{eq:infsupVr} and
  $s_2$ is the supremum in~\eqref{eq:inf-sup-discrete}, then 
  $s_1 = s_2$.  Obviously, $s_1 \ge s_2$ as $\Vr \supseteq
  \Vhr$. To prove the reverse inequality, observe that $ s_1 = \| \Tr
  \ww \|_V,$ by~\eqref{eq:Tr}. Since $\Tr \ww$ is in $\Vhr$, we have
  \[
  s_1 
  =\frac{ (\Tr \ww, \Tr \ww)_V }{ \| \Tr \ww \|_V } 
  \le  
  \sup_{\vv \in \Vhr } \frac{ ( \Tr \ww,\vv)_V }{ \| \vv \|_V } = s_2.
  \]
  Therefore, the inf-sup condition~\eqref{eq:inf-sup-discrete} follows
  from~\eqref{eq:infsupVr}.
\end{proof}

\begin{remark}[Test basis] \label{rem:basis} The above proof also
  shows that under the assumptions of Theorem~\ref{thm:approach}, the
  operator $\Tr : U_h \mapsto \Vr$ is {\em injective}: indeed, if $\Tr
  \ww =0$, then $b(\ww,\vv)=0$ for all $\vv$ in $\Vr$, so 
  by the inf-sup condition~\eqref{eq:infsupVr},
  we conclude that $\ww =0$. Note that the injectivity of $\Tr$ implies that 
  \[
  \dim (\Vhr) = \dim(U_h).
  \]
  It also implies that a basis for $\Vhr$ can be computed by applying
  $\Tr$ on any basis for $U_h$.
\end{remark}

\begin{remark}[Conditioning]      \label{rem:cond-abd}
  Suppose $\bb_i$ is a basis for $U_h$. Then, under the assumptions of
  Theorem~\ref{thm:approach}, $ \Tr \bb_i$ is a basis for $\Vhr$, as
  seen in Remark~\ref{rem:basis}. The $ij$-th entry of the {\em
    stiffness matrix} of the DPG method with respect to this basis is
  given by
  $
  S_{ij} = b( \bb_j,\Tr\bb_i) = (\Tr \bb_j, \Tr \bb_i)_V.
  $
  Clearly, $S$ is symmetric. The above mentioned injectivity of
  $\Tr$ implies that $S$ is also positive definite.  To understand the
  conditioning of~$S$, let us first note that
  \begin{equation}
    \label{eq:Tr-equiv}
    \frac{C_1}{\cpi} \| \ww \|_U \le \| \Tr \ww \|_V \le C_2 \| \ww \|_U,
    \qquad \forall \ww \in U_h. 
  \end{equation}
  This follows from the inf-sup condition~\eqref{eq:infsupVr} in the
  proof of Theorem~\ref{thm:approach} and the continuity
  property~\eqref{eq:b-cnt}.  Next, suppose
  $\xx = \sum_i x_i \bb_i$ is
  the basis expansion of any $\xx$ in $U_h$, and $\lambda_0,
  \lambda_1$ are positive numbers such that
  \begin{equation}
    \label{eq:lambdas}
    \lambda_0 \| x \|_{\ell^2}^2  \le \| \xx \|_U^2 \le \lambda_1 \| x \|_{\ell^2}^2, 
    \qquad\forall \xx \in U_h.
  \end{equation}
  Since $x^t S x = \| \Tr \xx\|_U^2$, these estimates imply that the
  Rayleigh quotient $x^t S x / x^t x$ is at most $\lambda_1 C_2^2$ and
  at least $C_1^2 \lambda_0/\cpi^2$. 
  Hence
  \begin{equation}
    \label{eq:kappa}
    \kappa(S) \le \frac{\lambda_1}{\lambda_0} \frac{C_2^2 \cpi^2}{C_1^2},
  \end{equation}
  where $\kappa(S)$ is the spectral condition number of $S$. This
  gives condition numbers comparable to other methods, as we shall see
  later in our examples.
\end{remark}

\section{First example: Laplace equation}
\label{section_Laplace}

The ideal DPG method for the Laplace equation was developed and
analyzed in~\cite{DemkoGopal:DPGanl}. In this section, we will set the
abstract forms and spaces of the previous section to those
from~\cite{DemkoGopal:DPGanl} and verify the hypotheses required to
apply Theorem~\ref{thm:approach}.  Roughly speaking, our main result
shows that if polynomials of degree $p$ are used to approximate the
solution of the Laplace equation, then a sufficient condition for
optimal convergence is that $T$ is approximated by polynomials of
degree $p+N$, where $N\ge 2$ is the space dimension. In the wording
of~\cite{DemkoGopal:DPGanl}, this means the ``enrichment degree''
should be chosen to be $N$. Ample numerical evidence, in support of
the choice of 2 as enrichment degree, was presented
in~\cite[\S~6.1]{DemkoGopal:DPGanl}, but all numerical experiments
were in the two-dimensional case.

In the remainder of this paper, we let $\om$ be a Lipschitz polyhedron
in $ \mathbb{R}^{N}$.  We denote by $\{\om_{h}\}_{h\in I}$ a family of
conforming shape regular simplicial finite element triangulations of
$\om$. The index $h$ now stands for the maximal diameter of simplexes
in $\om_h$.

\subsection{Infinite dimensional spaces}      

Let $\VVV = \RRR^N$. We use $L^2(\om,\VVV)$ to denote the set of
vector-valued functions whose components are square integrable.  We
set the trial and test spaces by
\begin{align*}
& U = L^{2}(\om;\mathbb{V})\times L^{2}(\om)\times H^{1/2}_{0}(\partial\om_h)\times H^{-1/2}(\partial\om_h),\\
& V = H(\text{div},\om_h)\times H^{1}(\om_h),
\end{align*}
where the ``broken'' Sobolev spaces (admitting interelement
discontinuities) are defined by $\Hdiv\oh = \{ \tau: \; \tau|_K \in
\Hdiv {K}, \;\forall K \in \oh\}$ and $H^1(\oh)  = \{ v: \; v|_K \in
H^1(K), \;\forall K \in \oh\}.$ They have the natural norms
\begin{align*}
\| v \|_{H^1(\oh)}^2
& = (v,v)_\oh + (\grad v,\grad v)_\oh,
\\
\| q \|_{\Hdiv\oh}^2
& = ( q,q)_\oh + ( \dive q, \dive q)_{\oh}.
\end{align*} 
The derivatives above, and in such notations throughout, are
calculated element by element and
\begin{align*}
  (r,s)_\oh =  \sum_{K\in\oh} (r,s)_K,
  \qquad
  \ip{ w, \ell}_{\d\oh} = \sum_{K\in\oh} \ip{ w,\ell}_{1/2,\d K}.
\end{align*}
and $\ip{ \cdot, \ell}_{1/2,\d K}$ denotes the action of a functional
$\ell$ in $H^{-1/2} (\d K)$.  We will also use $\| r \|_\oh$ to denote
the norm $(r,r)_\oh^{1/2}$.  The spaces of traces and fluxes are
defined by $ H_0^{1/2}(\d\om_h) = \{\eta\in \prod_K H^{1/2}(\d K):  \exists\, w \in H_0^1(\om)$
such that $\eta|_{\d K} = w|_{\d K} \; \forall K \in \oh \},$ and
$H^{-1/2}(\d \om_h) = \{ \eta \in \prod_K H^{-1/2}(\d K): \exists \,q
\in \Hdiv\om \text{ such that } \eta|_{\d K} = q\cdot n|_{\d K} \;
\forall K \in \oh \},$ with respective norms
\begin{align}
  \label{eq:quotient-h1}
  \| \hat u \|_{H_0^{1/2}(\d\oh)} 
  & = \inf
      \big\{ \| w \|_{H^1(\om)}:  \; \forall w \in H_0^1(\om)
      \text{ such that } \hat u|_{\d K} = w|_{\d K} \big\},
\\
\label{eq:quotient-hdiv}
  \|  \sgn \|_{H^{-1/2}(\d\oh)} 
  & =  \inf
      \big\{ \| q \|_{\Hdiv\om}:  \; \forall  q \in \Hdiv\om
      \text{ such that } \sgn |_{\d K} = q\cdot n|_{\d K} \big\}.
\end{align}
The spaces $U$ and $V$ are endowed with product norms, i.e.,
\begin{align*}
&\Vert (\sigma,u,\hat{u},\hat{\sigma}_{n})\Vert_{U}^2
= \Vert\sigma\Vert_{\om}^2+\Vert u\Vert_{\om}^2
+\Vert\hat{u}\Vert_{H^{1/2}(\partial\om_h)}^2
+\Vert \hat{\sigma}_n\Vert_{H^{-1/2}(\partial\om_h)}^2,
\\
&\Vert (\tau,v)\Vert_{V}^2=
\Vert\tau\Vert_{H(\text{div},\om_h)}^2
+\Vert v\Vert_{H^{1}(\om_h)}^2.
\end{align*}

\subsection{Forms}      

The ultraweak formulation of the Laplace equation derived
in~\cite{DemkoGopal:DPGanl} reads as follows: Find $\uu \equiv
(\sigma,u,\hat{u},\hat{\sigma}_{n}) \in U$ satisfying~\eqref{eq:1} for
$\vv \equiv (\tau,v) \in V$ where the forms $b(\cdot,\cdot)$ and
$l(\cdot)$ are set by
\begin{align*}
 b(\uu,\vv) & = (\sigma,\tau)_{\om}-(u,\dive \tau)_{\om_h}
+\left\langle \hat{u},\tau\cdot n\right\rangle_{\partial\om_h}
-(\sigma,\grad  v)_{\om_h}+\left\langle v, \hat{\sigma}_{n}\right\rangle_{\partial\om_h},
\\
l(\vv) & = (f,v)_{\om},
\end{align*}
for some $f$ in $L^2(\om)$. The $u$-component of $\uu$ solves the
Laplace equation with zero Dirichlet boundary conditions on $\d \om$.
For details, consult~\cite{DemkoGopal:DPGanl}.

\subsection{Discrete spaces}        \label{ssec:discrete}

Let us first establish notation for a few polynomial spaces that we
will use here and throughout.  Let $P_{p}(K)$ denote the space of
polynomials of degree at most $p$ on a simplex $K$. We write
$P_{p}(K;\mathbb{V})$ for vector valued functions whose components are
in $P_p(K)$.  Let $\triangle_{m}(K)$ denote the set of all
$m$-dimensional sub-simplices of $K$. Define
\begin{align*}
\mathring{P}_{p}(K) & = \{p_{p}\in P_{p}(K):\; p_{p}|_{\partial K}=0\},\\
P_{p}(\partial K) & = \{\mu : \mu |_{F}\in P_{p}(F),
                \;\forall F \in \triangle_{N-1}(K)\},\\
\tilde{P}_{p}(\partial K)
 & = P_{p}(\partial K) \cap \mathcal{C}^{0}(\partial K),
\end{align*}
where $\mathcal{C}^0(D)$ denotes the set of continuous functions on any
domain $D$.

Using these notations, we set the trial approximation space for the
DPG method by
\begin{alignat*}{3}
U_{h}  
& = \{(\sigma,u,\hat{u},\hat{\sigma}_{n})\in U 
& : \quad &  \sigma |_{K}\in P_{p}(K;\mathbb{V}), \;
           u|_{K}\in P_{p}(K),
\\
& & & \; \hat{u}|_{\partial K}\in \tilde{P}_{p+1}(\partial K), \; 
       \hat{\sigma}_{n}|_{\partial K}\in P_{p}(\partial K), \;\; \forall K \in \oh\}.
\end{alignat*}
The discrete test space is defined by $\Vhr = \Tr(U_h)$, so to
complete the prescription of the practical DPG method, we only need to
specify $\Vr$. Set
\begin{align}
\label{eq:VrLaplace}
\Vr & = \{ (\tau, v)\in V :\;
\tau |_{K}\in P_{r}(K;\mathbb{V}), \;
v|_{K}\in P_{r}(K), \; \forall K \in \oh\}.
\end{align}
where the degree $r \ge p+N$. Clearly, the 
application of $\Tr$, as defined by~\eqref{eq:Tr}, 
can proceed locally, element by element, since $\Vr$ has no
interelement continuity constraints.

\subsection{Verification of the assumptions}

To apply Theorem~\ref{thm:approach} to the above setting, we need to
verify its assumptions.
\begin{itemize}
\item Assumption~\eqref{eq:inj2} 
  is verified   by~\cite[Lemma~4.1]{DemkoGopal:DPGanl}.
\item Assumption~\eqref{eq:equiv2}
  is verified   by~\cite[Theorem~4.2]{DemkoGopal:DPGanl}.
\item Assumption~\eqref{eq:b-cnt} is easy to verify. 
  For example, to show the
  continuity of the term $\ip{ \hat u, \tau \cdot n}_{\d\oh}$, we let $w \in
  H^1(\om)$ be any extension of $\hat u$ and observe that 
  \begin{align*}
    \ip{\hat u, \tau \cdot n }_{\d \oh}
    & = (\grad w, \tau)_\oh + (w,\dive \tau)_\oh
     \le \| w \|_{H^1(\om)} \| \tau \|_{\Hdiv\oh}.
  \end{align*}
  Taking the infimum over all such extensions~$w$, we obtain
  \[
  \ip{\hat u, \tau \cdot n }_{\d \oh} 
  \le \| \hat u \|_{H_0^{1/2}(\d \oh)}   \| \tau \|_{\Hdiv\oh}.
  \]
  The other terms in the bilinear form are similar or simpler.

\item Assumption~\eqref{eq:pi} is verified below.
\end{itemize}

An operator $\vpi$ satisfying~\eqref{eq:pi} will be constructed in the
form $\vpi \vv = (\vpid \tau, \vpig v)$.  We construct the operator
$\vpig$ in Lemma~\ref{lem:piG} below, and we construct the operator
$\vpid$ in Lemma~\ref{lem:projection_div}.  But first, we need the
following intermediate result.  Let $\Brg(K)=\{p_{r}\in
P_{r}(K):p_{r}|_{E}=0, \forall E\in\triangle_{N-2}(K)\}$ and $h_K =
\mathrm{diam}(K)$.  Hereon we use $c$ and $C$ to denote generic
constants (whose value at different occurrences may differ) independent
of $h_K$, but possibly dependent on the shape regularity of $K$ and
the polynomial degree~$p$. We also let $\ip{\cdot,\cdot}_{\d K}$
denote the $L^2(\d K)$-inner product.

\begin{lemma}
\label{lemma_h1}
Let $r=p+N$. Then, for every $v\in H^{1}(K)$, there is a unique
$\vpi_r^0v\in \Brg(K)$ satisfying
\begin{subequations}
\begin{align}
\label{projection_H1_1}
(\vpi_r^0 v - v,q_{p-1})_{K}
& =0,\quad\forall q_{p-1}\in P_{p-1}(K),\\
\label{projection_H1_2}
\langle \vpi_r^0 v - v, \mu_{p}\rangle_{\partial K}
& =0,\quad\forall \mu_{p}\in P_{p}(\partial K),\\
\label{projection_H1_3}
\Vert \vpi_r^0v\Vert_{K}+h_{K}\Vert \grad  \vpi_r^0v \Vert_{K}
& \leq 
C\left(\Vert v\Vert_{K}+h_{K}\Vert \grad  v\Vert_{K} \right).
\end{align}
\end{subequations}
\end{lemma}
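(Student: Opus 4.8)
The statement splits into two essentially independent tasks: showing that the defining conditions \eqref{projection_H1_1}--\eqref{projection_H1_2} determine a unique element $\vpi_r^0 v\in\Brg(K)$, and establishing the stability bound \eqref{projection_H1_3}.

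For well-posedness, the plan is to prove that the linear functionals in \eqref{projection_H1_1}--\eqref{projection_H1_2}, namely $w\mapsto(w,q_{p-1})_K$ for $q_{p-1}\in P_{p-1}(K)$ together with $w\mapsto\langle w,\mu_p\rangle_{\partial K}$ for $\mu_p\in P_p(\partial K)$ (with $P_p(\partial K)=\bigoplus_{F\in\triangle_{N-1}(K)}P_p(F)$ carrying no continuity across faces), form a unisolvent set on $\Brg(K)$. Injectivity first: suppose $w\in\Brg(K)$ annihilates all of them. Since $w$ vanishes on every $(N-2)$-subsimplex of $K$, its restriction to any face $F\in\triangle_{N-1}(K)$ vanishes on $\partial F$, so $w|_F=b_F\phi_F$ with $\phi_F\in P_{r-N}(F)=P_p(F)$, where $b_F$ is the product of the $N$ barycentric coordinates associated with $F$ (positive in the relative interior of $F$); testing \eqref{projection_H1_2} with the $\mu_p$ equal to $\phi_F$ on $F$ and zero on the other faces yields $\int_F b_F\phi_F^2=0$, hence $w|_F=0$. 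As $F$ is arbitrary, $w|_{\partial K}=0$, so $w=b_K\psi$ with $b_K$ the interior bubble and $\psi\in P_{r-N-1}(K)=P_{p-1}(K)$; testing \eqref{projection_H1_1} with $q_{p-1}=\psi$ forces $\psi=0$, so $w=0$. Surjectivity then follows once the number of conditions, $\dim P_{p-1}(K)+\sum_F\dim P_p(F)$, is matched against $\dim\Brg(K)$: the short exact sequence $0\to\mathring P_r(K)\to\Brg(K)\to\bigoplus_F\mathring P_r(F)\to0$ (the last arrow being the trace onto the faces, surjective because the face bubbles glue to a continuous piecewise polynomial on $\partial K$ extendable to $P_r(K)$) gives $\dim\Brg(K)=\dim\mathring P_r(K)+\sum_F\dim\mathring P_r(F)$, and the bubble identities $\mathring P_r(K)=b_K P_{r-N-1}(K)$, $\mathring P_r(F)=b_F P_{r-N}(F)$ turn this into exactly $\dim P_{p-1}(K)+\sum_F\dim P_p(F)$ precisely because $r=p+N$. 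Hence the finite linear system is invertible and $\vpi_r^0 v$ is uniquely defined for every $v\in H^1(K)$, the right-hand sides of \eqref{projection_H1_1}--\eqref{projection_H1_2} being meaningful thanks to the $L^2(\partial K)$ trace theorem.

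For \eqref{projection_H1_3} I would run the standard affine-scaling argument. Fix a reference simplex $\hat K$ with affine bijection $F_K:\hat K\to K$. The space $\Brg$, the polynomial degrees, and the moment conditions are all invariant under pullback up to positive Jacobian constants (absorbed face-by-face and elementwise into the test functions), so $\vpi_r^0$ commutes with pullback: $\widehat{\vpi_r^0 v}=\hat\vpi_r^0\hat v$ for $\hat v=v\circ F_K$. On $\hat K$ the operator $\hat\vpi_r^0:H^1(\hat K)\to\Brg(\hat K)$ is the composition of the map sending $\hat v$ to its finitely many moments — bounded on $H^1(\hat K)$ by Cauchy--Schwarz and the trace theorem — with the inverse of the now-invertible moment matrix, hence $\|\hat\vpi_r^0\hat v\|_{H^1(\hat K)}\le\hat C\|\hat v\|_{H^1(\hat K)}$ with $\hat C$ depending only on $\hat K$, $p$, $N$. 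Transforming back with the usual shape-regularity estimates $\|w\|_K\simeq|\det DF_K|^{1/2}\|\hat w\|_{\hat K}$ and $\|\grad w\|_K\simeq|\det DF_K|^{1/2}h_K^{-1}\|\hat\grad\hat w\|_{\hat K}$ converts this into \eqref{projection_H1_3} with $C$ depending only on shape regularity and $p$.

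The only genuinely delicate point is the unisolvence step, and inside it the dimension bookkeeping: one must recognize the bubble decompositions $\mathring P_{p+N}(K)=b_K P_{p-1}(K)$ and $\mathring P_{p+N}(F)=b_F P_p(F)$, which are exactly what make the count of moment conditions coincide with $\dim\Brg(K)$ when $r=p+N$. Once that is in place, the positivity of $b_F$ and $b_K$ gives injectivity for free, and the stability estimate is entirely routine scaling.
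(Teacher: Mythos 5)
Your proposal is correct and follows essentially the same route as the paper: the same bubble factorizations $\mathring P_{p+N}(K)=b_KP_{p-1}(K)$ and $\mathring P_{p+N}(F)=b_FP_p(F)$ for the dimension count, the same face-then-interior argument for injectivity, and the standard reference-element scaling for the stability bound. You merely spell out two steps the paper leaves implicit (the surjectivity of the trace onto the face bubbles behind the dimension identity, and the details of the affine-mapping argument), which is fine.
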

\begin{proof}
  First, to see that the number of the equations in
  (\ref{projection_H1_1})-(\ref{projection_H1_2}) equal $\dim 
  \Brg(K)$, observe that 
  \begin{equation}
    \label{eq:3}
    \dim \Brg(K) = 
    \dim\mathring{P}_{r}(K)
    +
    \sum_{F \in \triangle_{N-1}(K)}\dim\mathring{P}_r(F).
  \end{equation}
  Let $b_K$ and $b_F$ denote the product of all barycentric
  coordinates that do not vanish everywhere on $K$ and $F$, resp. Then
  $\mathring{P}_{r}(K) = b_K P_{r-N-1}(K)$ and $\mathring{P}_r(F) =
  b_F P_{r-N}(F)$. Therefore, by our choice of $r$, we have
  $\dim\mathring{P}_{r}(K) = \dim P_{p-1}(K)$ and
  $\dim\mathring{P}_r(F) = \dim P_p(F)$. It then follows
  from~\eqref{eq:3} that \eqref{projection_H1_1}-\eqref{projection_H1_2}
    is a square system for $\vpi_r^0 v$.

    Hence, to prove that
    \eqref{projection_H1_1}-\eqref{projection_H1_2} has a unique
    solution, it suffices to prove that if $v=0$, then $\vpi_r^0v=0$.
    Since $\vpi_r^0v\in \Brg(K)$, on any face $F \in \tr_{N-1}(K)$, we
    may write $(\vpi_r^0v)|_{F} = b_F w_p$ for some $w_p \in P_p(F)$.
    But then,~\eqref{projection_H1_2} implies that $\vpi_r^0v$ must
    vanish on $\d K$, so $\vpi_r^0 v = b_K z_{p-1}$ for some $z_{p-1}
    \in P_{p-1}(K)$.  Then (\ref{projection_H1_1}) implies that
    $\vpi_r^0v=0$ on $K$.

    Finally, one can prove (\ref{projection_H1_3})  using a standard affine
    mapping argument. 
\end{proof}

\begin{lemma}
\label{lem:piG}
Let $r=p+N$. Define $\vpig v=\vpi^0_{r}(v-\overline{v})+\overline{v}$,
where $\overline{v}|_{K}=\vert K\vert^{-1}\int_{K}v$. 
Then
\begin{subequations}
\label{eq:projectionGrad}
\begin{align}
\label{projection_H1_1_modified}
(\vpig v - v,q_{p-1})_{K}
& =0,
&& \quad\forall q_{p-1}\in P_{p-1}(K),\\
\label{projection_H1_2_modified}
\langle \vpig v - v, \mu_{p}\rangle_{\partial K}
& =0,
&& \quad\forall \mu_{p}\in P_{p}(\partial K),\\
\label{projection_H1_3_modified}
\Vert \vpig v \Vert_{H^{1}(K)}
 \leq 
C&\Vert v\Vert_{H^{1}(K)},
&&\quad\forall v\in H^{1}(K).
\end{align}
\end{subequations}
\end{lemma}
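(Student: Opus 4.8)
The plan is to derive Lemma~\ref{lem:piG} directly from Lemma~\ref{lemma_h1} by exploiting the affine decomposition $v = (v-\overline v) + \overline v$. Since $\overline v$ is constant on $K$, it lies in $\Brg(K)$ only trivially, so the construction $\vpig v \defn \vpi_r^0(v-\overline v) + \overline v$ is well-defined by Lemma~\ref{lemma_h1} applied to $v - \overline v \in H^1(K)$. The three conclusions must then be checked one at a time.

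For the moment conditions: to prove \eqref{projection_H1_1_modified}, I would write $\vpig v - v = \vpi_r^0(v-\overline v) - (v - \overline v)$, which is exactly the quantity controlled by \eqref{projection_H1_1} with $v$ replaced by $v-\overline v$; hence it is $L^2(K)$-orthogonal to all of $P_{p-1}(K)$, which is \eqref{projection_H1_1_modified}. The identity \eqref{projection_H1_2_modified} follows in precisely the same way from \eqref{projection_H1_2}, again because the added constant $\overline v$ cancels in $\vpig v - v$. So both moment conditions are immediate and require no estimates.

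The stability bound \eqref{projection_H1_3_modified} is where the actual work is, though it is still routine. First, $\grad \vpig v = \grad \vpi_r^0(v - \overline v)$, so by the gradient part of \eqref{projection_H1_3} (applied to $v-\overline v$) we get $h_K\|\grad \vpig v\|_K \le C(\|v-\overline v\|_K + h_K\|\grad(v-\overline v)\|_K) = C(\|v-\overline v\|_K + h_K \|\grad v\|_K)$. The \Poincare\ inequality on the shape-regular simplex $K$ gives $\|v - \overline v\|_K \le C h_K \|\grad v\|_K$, so $\|\grad \vpig v\|_K \le C\|\grad v\|_K$. For the $L^2$ part, $\|\vpig v\|_K \le \|\vpi_r^0(v-\overline v)\|_K + \|\overline v\|_K$; bounding the first term by \eqref{projection_H1_3} and \Poincare\ as above gives $\le C(\|v-\overline v\|_K + h_K\|\grad v\|_K) \le C h_K\|\grad v\|_K \le C h_K \|v\|_{H^1(K)}$, while $\|\overline v\|_K = |K|^{1/2}|\overline v| \le \|v\|_K$ by Cauchy--Schwarz. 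Adding these and combining with the gradient bound yields $\|\vpig v\|_{H^1(K)} \le C\|v\|_{H^1(K)}$, which is \eqref{projection_H1_3_modified}.

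The only subtlety worth flagging is tracking the $h_K$-dependence correctly so that the final constant $C$ is genuinely independent of $h_K$: the \Poincare\ constant on $K$ scales like $h_K$ (with the shape-regularity-dependent factor absorbed into $C$), and it is exactly this scaling that converts the weighted estimate \eqref{projection_H1_3} into the clean unweighted bound \eqref{projection_H1_3_modified}. No new affine-mapping argument is needed beyond what already underlies Lemma~\ref{lemma_h1}; the whole proof is a short bookkeeping exercise once that lemma is in hand.
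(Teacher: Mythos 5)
Your proposal is correct and follows essentially the same route as the paper: the identity $\vpig v - v = (\vpi_r^0 - I)(v-\overline v)$ gives the two moment conditions directly from Lemma~\ref{lemma_h1}, and the stability bound is obtained exactly as in the paper by combining \eqref{projection_H1_3} with the \Poincare--Friedrichs inequality $\Vert v-\overline v\Vert_K \le C h_K \Vert\grad v\Vert_K$ to cancel the $h_K$ weights. No gaps.
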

\begin{proof}
  Obviously, $\vpig v-v=(\vpi^0_r - I)(v-\overline{v})$.  Hence,
  (\ref{projection_H1_1_modified}) and
  (\ref{projection_H1_2_modified}) follows
  from~\eqref{projection_H1_1} and~\eqref{projection_H1_2} of
  Lemma~\ref{lemma_h1}.  It remains to prove (\ref{projection_H1_3_modified}). By~\eqref{projection_H1_3} and the \Poincare-Friedrichs inequality, 
\begin{align*}
\Vert \vpig v\Vert_{K} & \leq\Vert\overline{v}\Vert_{K}+\Vert \vpi^0_r(v-\overline{v})\Vert_{K}
\\
 & \leq \Vert \overline{v}\Vert_{K}+C\left(\Vert v-\overline{v}\Vert_{K} +h_{K}\Vert\grad  (v-\overline{v})\Vert_{K}\right)
\\
 & \leq C\left(\Vert v\Vert_{K} + h_{K}\Vert \grad  v\Vert_{K}\right),
\qquad\text{ and}
\\
h_{K}\Vert\grad  \vpig v \Vert_{K} 
& = h_{K}\Vert \grad  \vpi^0_r(v-\overline{v})\Vert_{K}\\
 & \leq C\left(\Vert v-\overline{v}\Vert_{K} + h_{K}\Vert \grad  (v-\overline{v})\Vert_{K}\right) 
\\
 & \leq Ch_{K}\Vert \grad  v\Vert_{K}.
\end{align*}
Canceling out $h_K$ and adding, (\ref{projection_H1_3_modified}) follows.
\end{proof}

\begin{lemma}
\label{lem:projection_div}
There is an operator $\vpid : \Hdiv K \mapsto P_{p+2}(K;\mathbb{V})$
such that for every $\tau\in \Hdiv K$, we have 
\begin{subequations}
\label{projection_div}
\begin{align}
\label{projection_div_1}
(\vpid\tau,q_{p})_{K}
& = (\tau,q_{p})_{K}, && \forall q_{p}\in P_{p}(K;\mathbb{V}),\\
\label{projection_div_2}
\langle \vpid\tau\cdot n,\mu_{p+1}\rangle_{\partial K}  
&=
\langle \mu_{p+1}, \tau\cdot n\rangle_{1/2, \partial K}
&& \forall \mu_{p+1}\in\tilde{P}_{p+1}(\partial K),\\
\label{projection_div_3}
\Vert\vpid\tau\Vert_{\Hdiv{K}} 
& \leq C\Vert\tau\Vert_{\Hdiv{K}}. 
\end{align}
\end{subequations} 
\end{lemma}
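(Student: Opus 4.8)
The plan is to realize $\vpid\tau$ as a \emph{stabilized Raviart--Thomas interpolant}. Recall that the Raviart--Thomas space $RT_{p+1}(K)$ satisfies $RT_{p+1}(K)\subset P_{p+2}(K;\VVV)$ and $\dive RT_{p+1}(K)=P_{p+1}(K)$, and is unisolvent with respect to the degrees of freedom $\tau\mapsto(\tau,q)_K$ for $q\in P_p(K;\VVV)$ together with $\tau\mapsto\int_F\tau\cdot n\,\mu$ for $\mu\in P_{p+1}(F)$ and each face $F$ of $K$. The classical interpolant is unavailable here because $\tau\cdot n$ lies only in $H^{-1/2}(\partial K)$, so the face degrees of freedom are unbounded on $\Hdiv K$ against discontinuous $\mu$; but the continuous space $\tilde P_{p+1}(\partial K)$ appearing in~\eqref{projection_div_2} embeds in $H^{1/2}(\partial K)$, and it is through this space that we shall feed in the normal trace.

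Concretely, I would first record two scaled local estimates: the trace inequality $\|\tau\cdot n\|_{H^{-1/2}(\partial K)}\le C(\|\tau\|_K+h_K\|\dive\tau\|_K)$, proved by a reference-element argument with the $h_K$-weighted Sobolev norms for which trace and extension are $h_K$-uniform; and the inverse inequality $\|\mu\|_{H^{1/2}(\partial K)}\le C h_K^{-1/2}\|\mu\|_{L^2(\partial K)}$ on $\tilde P_{p+1}(\partial K)$. Together they show that $\mu\mapsto\langle\mu,\tau\cdot n\rangle_{1/2,\partial K}$ is a bounded functional on $\tilde P_{p+1}(\partial K)$ whose $L^2(\partial K)$-Riesz representative $\xi_\tau\in\tilde P_{p+1}(\partial K)$ depends linearly on $\tau$ and satisfies $h_K^{1/2}\|\xi_\tau\|_{L^2(\partial K)}\le C(\|\tau\|_K+h_K\|\dive\tau\|_K)$. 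I would then \emph{define} $\vpid\tau\in RT_{p+1}(K)\subset P_{p+2}(K;\VVV)$ by prescribing $(\vpid\tau,q)_K=(\tau,q)_K$ for all $q\in P_p(K;\VVV)$ and $\int_F\vpid\tau\cdot n\,\mu=\int_F\xi_\tau\,\mu$ for all $\mu\in P_{p+1}(F)$ and every face $F$; unisolvence makes $\vpid$ a well-defined bounded linear operator.

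The verification is then short. Property~\eqref{projection_div_1} is built in. For~\eqref{projection_div_2}, every $\mu_{p+1}\in\tilde P_{p+1}(\partial K)$ restricts to $P_{p+1}(F)$ on each face, so $\langle\vpid\tau\cdot n,\mu_{p+1}\rangle_{\partial K}=\langle\xi_\tau,\mu_{p+1}\rangle_{\partial K}=\langle\mu_{p+1},\tau\cdot n\rangle_{1/2,\partial K}$ by the choice of $\xi_\tau$. The divergence bound in~\eqref{projection_div_3} then comes for free: for any $q\in P_{p+1}(K)$ one has $\grad q\in P_p(K;\VVV)$ and $q|_{\partial K}\in\tilde P_{p+1}(\partial K)$, so integrating by parts and using~\eqref{projection_div_1}--\eqref{projection_div_2} gives $(\dive\vpid\tau,q)_K=-(\vpid\tau,\grad q)_K+\langle\vpid\tau\cdot n,q\rangle_{\partial K}=-(\tau,\grad q)_K+\langle q,\tau\cdot n\rangle_{1/2,\partial K}=(\dive\tau,q)_K$; since $\dive\vpid\tau\in P_{p+1}(K)$, this identifies $\dive\vpid\tau$ with the $L^2(K)$-projection of $\dive\tau$ onto $P_{p+1}(K)$, so $\|\dive\vpid\tau\|_K\le\|\dive\tau\|_K$ with no $h_K$-dependence. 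It remains to bound $\|\vpid\tau\|_K$, which follows by transferring to the reference simplex: norm equivalence on the finite-dimensional space $RT_{p+1}$, followed by rescaling, yields $\|\vpid\tau\|_K\le C(\|\tau\|_K+h_K^{1/2}\|\xi_\tau\|_{L^2(\partial K)})$, and the bound on $\|\xi_\tau\|_{L^2(\partial K)}$ recorded above gives $\|\vpid\tau\|_K\le C\|\tau\|_{\Hdiv K}$ for $h_K\le1$, the only case of interest.

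The one delicate point is this $h_K$-uniform bound on $\|\vpid\tau\|_K$: since $\|\cdot\|_{\Hdiv K}$ is not scale invariant, a naive change of variables applied to all of $\vpid\tau$ would leak a negative power of $h_K$ through the boundary term. The two features that prevent this are precisely (i) treating $\dive\vpid\tau$ via the exact projection identity above rather than by scaling, and (ii) routing the normal-trace data through $\tilde P_{p+1}(\partial K)\subset H^{1/2}(\partial K)$ rather than through the discontinuous $P_{p+1}(\partial K)$, which is what keeps the relevant boundary functionals bounded on $\Hdiv K$. I expect the careful bookkeeping of $h_K$-powers in the trace and inverse inequalities to be the main, though routine, obstacle.
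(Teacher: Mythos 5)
Your construction is correct and is essentially the paper's: both realize $\vpid\tau$ as the degree-$(p+1)$ Raviart--Thomas interpolant whose interior moments match $\tau$ and whose normal trace is the $L^2(\partial K)$-Riesz representative in $\tilde P_{p+1}(\partial K)$ of the functional $\mu\mapsto\langle\mu,\tau\cdot n\rangle_{1/2,\partial K}$ (the paper packages this as a square system on the constrained subspace $B^{\mathrm{div}}_{p+2}(\hat K)$ rather than via an explicit $\xi_\tau$, but the resulting operator is the same), and both obtain the divergence bound from the commutativity identity $\dive\vpid\tau=\varPi_{p+1}\dive\tau$. The only presentational difference is that the paper does the $L^2$ stability by Piola-mapping from the reference simplex and invoking continuity of the reference-element operator, whereas you track the $h_K$-powers directly on $K$ through scaled trace and inverse inequalities; both routes yield \eqref{projection_div_3}.
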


\begin{proof}

  We will first construct the operator on the unit simplex $\hat K$ in
  $\RRR^N$.  Recalling  the
  notations in \S\ref{ssec:discrete}, define $P_{p}^{\bot}(\partial
  \hat{K})$ to be the $L^{2}(\partial \hat{K})$-orthogonal complement
  of $\tilde{P}_{p}(\partial \hat{K})$ in $P_{p}(\partial \hat{K})$, and 
  \begin{equation*}
    B_{p+2}^{\text{div}}(\hat{K})=\{\hat{\tau}\in P_{p+1}(\hat{K};\mathbb{V})+\hat{x}P_{p+1}(\hat{K}):\langle \hat{p}_{\bot},
\hat{\tau}\cdot \hat{n}\rangle_{\partial \hat{K}}=0,
\quad\forall \hat{p}_{\bot}\in P_{p+1}^{\bot}(\partial \hat{K})\}.
\end{equation*} 
We  construct an operator $\hat{\varPi}_{p+2}^{\text{div}}$ 
mapping $H(\text{div},\hat{K})$ into $B_{p+2}^{\text{div}}(\hat{K})$ by
\begin{subequations}
\begin{align}
\label{projection_div_reference_1}
(\hat{\varPi}_{p+2}^{\text{div}}\hat{\tau},\hat{q}_{p})_{\hat{K}} 
& = (\hat{\tau},\hat{q}_{p})_{\hat{K}},
&&\forall \hat{q}_{p}
\in P_{p}(\hat{K};\mathbb{V}),\\
\label{projection_div_reference_2}
\langle \hat{\varPi}_{p+2}^{\text{div}}\hat{\tau}\cdot \hat{n},\hat{\mu}_{p+1}\rangle_{\partial \hat{K}}
& =
\ip{\hat{\mu}_{p+1}, \hat{\tau}\cdot \hat{n}}_{1/2,\d K}
&&\forall \hat{\mu}_{p+1}\in\tilde{P}_{p+1}(\partial \hat{K}).
\end{align}
\end{subequations}
We claim that
(\ref{projection_div_reference_1})--(\ref{projection_div_reference_2})
uniquely determine $\hat{\varPi}_{p+2}^{\text{div}}\hat{\tau} \in
B_{p+2}^{\text{div}}(\hat{K})$.  Indeed, if their right hand sides
vanish, then since $\hat{\varPi}_{p+2}^{\text{div}}\hat{\tau}$ is in
$B_{p+2}^{\text{div}}(\hat{K})$, we find that
$\hat{\varPi}_{p+2}^{\text{div}}$ is a function in the Raviart-Thomas
space whose canonical degrees of freedom vanish (see
e.g.,~\cite[Definition~5]{Nedel80}), so
$\hat{\varPi}_{p+2}^{\text{div}}\hat{\tau}=0$.  Hence
(\ref{projection_div_reference_1})--(\ref{projection_div_reference_2})
uniquely defines $\hat{\varPi}_{p+2}^{\text{div}}\hat{\tau}$.

Now, we define $\vpid$ on any general simplex $K$ by mapping
$\hat{\varPi}_{p+2}^{\text{div}}\hat{\tau}$ from $\hat K$ using the
Piola transform, as follows.  Let $G_{K}$ be the affine homeomorphism
from $\hat{K}$ onto $K$ and let $A$ denote its \Frechet\ derivative.
Given any $\tau \in \Hdiv K$, let $\hat \tau (\hat x) $ in $\Hdiv
{\hat K}$ be defined by $\tau \circ G_K = (\det A)^{-1}A \hat \tau.$
Then, define $\vpid\tau$ by
\begin{equation*}
\vpid\tau(x) = \dfrac{A}{\det A}\hat{\varPi}_{p+2}^{\text{div}}
\hat{\tau}(\hat{x}),\quad\text{ with }x=G_{K}(\hat{x}).
\end{equation*} 
%
We will now show that this $\vpid \tau$ satisfies the three
properties in~\eqref{projection_div}.

First, observe that~\eqref{projection_div_reference_1}
and~\eqref{projection_div_reference_2} imply the corresponding
identities on $K$, namely,
\begin{align*}
  (\vpid\tau - \tau, \, A^{-t}\hat{q}_{p} \circ G_{K}^{-1} )_{K}  
  & = 0,
  &&\forall \hat{q}_{p}   \in P_{p}(\hat{K};\mathbb{V}),
  \\
    \langle \vpid\tau\cdot n ,
  \, 
  \hat{\mu}_{p+1} \circ G_{K}^{-1} \rangle_{\partial K}
  & 
  = \langle \hat{\mu}_{p+1} \circ G_{K}^{-1},
  \tau\cdot n \rangle_{1/2, \partial K}
  && 
  \forall \hat{\mu}_{p+1}\in\tilde{P}_{p+1}(\partial \hat{K}).
\end{align*}
This implies~\eqref{projection_div_1} and~\eqref{projection_div_2}.

It only remains to prove~\eqref{projection_div_3}.  We do this in two
steps. First, we prove an $L^2(K)$ bound using
the Piola map's well-known estimates for shape regular meshes,
namely
\begin{gather*}
c
\Vert\hat{\tau}\Vert_{\hat{K}}
\leq  
\Vert\tau\Vert_{K}\dfrac{\vert K\vert^{1/2}}{h_{K}}
\leq C \Vert\hat\tau \Vert_{\hat{K}},
\\
c
\Vert\mathop{\dive}\hat{\tau}\Vert_{\hat{K}}
 \leq 
 \Vert\dive\tau\Vert_{K}\vert K\vert^{1/2}
\leq C
\Vert \dive\hat\tau\Vert_{\hat{K}}.
\end{gather*}
Together with the fact that  $\hat{\varPi}_{p+2}^{\text{div}}$ is a continuous operator on $\Hdiv{\hat K}$, we obtain
\begin{equation}
\label{eq:5}
\Vert\vpid\tau\Vert_{K}+h_{K}\Vert \dive\vpid\tau\Vert_{K}\leq
C\left(\Vert\tau\Vert_{K}+h_{K}\Vert\dive \tau\Vert_{K}\right).
\end{equation}
In particular, this proves the $L^2(K)$-bound $\Vert \vpid\tau\Vert_{K}\leq
C\Vert\tau\Vert_{\Hdiv{K}}$.

Next, we prove a better bound on the divergence norm $\Vert
\dive\vpid\tau\Vert_{K}$
by showing that
\[
\dive (\vpid\tau)=\varPi_{p+1}\dive\tau
\] 
where $\varPi_{p+1}$ is the $L^{2}(K)$-orthogonal projection onto
$P_{p+1}(K)$.  Indeed, 
for any $\omega_{p+1}\in P_{p+1}(K)$, we have, due
to~\eqref{projection_div_1} and~\eqref{projection_div_2}, that
\begin{align*}
(\dive(\vpid\tau),\omega_{p+1})_{K} & =-(\vpid\tau,\grad \omega_{p+1})_{K}
+\langle (\vpid\tau)\cdot n,\omega_{p+1}\rangle_{\partial K}\\
 & = -(\tau,\grad \omega_{p+1})_{K}+
\ip{\omega_{p+1}, \tau\cdot n}_{1/2,\d K}
 \\
 & = (\dive\tau,\omega_{p+1})_{K}.
\end{align*}
Hence, 
\begin{equation}
\label{eq:4}
\Vert\dive(\vpid\tau)\Vert_{K}=\Vert \varPi_{p+1}\dive\tau\Vert_{K}\leq \Vert\dive\tau\Vert_{K}.
\end{equation}
Estimates~\eqref{eq:5} and~\eqref{eq:4} prove~\eqref{projection_div_3}.
\end{proof}

Now we are ready to apply Theorem~\ref{thm:approach} to obtain a convergence result for the practical DPG method for the Laplace equation.

\begin{theorem}
  \label{thm:laplace}
  Let $r\ge p+N$. Then the exact and discrete solutions for the DPG
  method for the Laplace's equation, namely $ \uu =
  (\sigma,u,\hat{u},\hat{\sigma}_{n})$ and $ \uuh =
  (\sigma_h,u_h,\hat{u}_h,\hat{\sigma}_{n,h})$, satisfy
  \begin{multline*}
    \| \sigma - \sigma _h \|_{L^2(\om)} + 
    \| u - u_h \|_{L^2(\om)} + 
    \| \hat{u} - \hat{u}_h \|_{H_0^{1/2}(\d\oh)} + 
    \| \hat\sigma_n - \hat{\sigma}_{n,h} \|_{H^{-1/2}(\d\oh)}
    \\
    \le C
    \inf_{ (\rho_h,w_h,\hat{w}_h,\hat{\eta}_{h}) \in U_h }
    \bigg(
    \| \sigma - \rho_h \|_{L^2(\om)} + 
      \| u - w_h \|_{L^2(\om)} 
      \\ + 
      \| \hat{u} - \hat{w}_h \|_{H_0^{1/2}(\d\oh)} + 
      \| \hat\sigma_n - \hat{\eta}_{n,h} \|_{H^{-1/2}(\d\oh)}
    \bigg).
  \end{multline*}
\end{theorem}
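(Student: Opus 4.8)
The plan is to reduce everything to Theorem~\ref{thm:approach}: assumptions~\eqref{eq:inj2}, \eqref{eq:equiv2}, and~\eqref{eq:b-cnt} have already been checked above, so the only remaining task is to exhibit a linear operator $\vpi:V\to\Vr$ satisfying~\eqref{eq:pi}. I would take $\vpi(\tau,v)=(\vpid\tau,\vpig v)$, applying the operators of Lemmas~\ref{lem:piG} and~\ref{lem:projection_div} element by element. First I would confirm that $\vpi$ lands in $\Vr$ for every $r\ge p+N$: since $\vpig v|_K\in\Brg(K)\subseteq P_{p+N}(K)\subseteq P_r(K)$ and $\vpid\tau|_K\in P_{p+2}(K;\VVV)\subseteq P_r(K;\VVV)$ (here $r\ge p+N\ge p+2$ because $N\ge2$), and since both components are piecewise polynomial while $\Vr$ carries no interelement constraint, we get $\vpi\vv\in\Vr$; linearity is immediate. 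The stability bound~\eqref{eq:pi-est} then follows by squaring and summing the elementwise estimates~\eqref{projection_div_3} and~\eqref{projection_H1_3_modified} over $K\in\oh$, which yields $\|\vpi\vv\|_V^2\le C\|\vv\|_V^2$ with $C$ depending only on the shape regularity and $p$, hence $\cpi=C^{1/2}$ independent of $h$.

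The substance of the argument is the Fortin identity~\eqref{eq:pi-iden}. For an arbitrary $\ww=(\rho,w,\hat w,\hat\eta)\in U_h$ I would expand
\begin{multline*}
b(\ww,\vv-\vpi\vv)
=(\rho,\tau-\vpid\tau)_\om-(w,\dive(\tau-\vpid\tau))_\oh+\langle\hat w,(\tau-\vpid\tau)\cdot n\rangle_{\d\oh}\\
-(\rho,\grad(v-\vpig v))_\oh+\langle v-\vpig v,\hat\eta\rangle_{\d\oh}
\end{multline*}
and show that each term vanishes by matching the polynomial degrees built into $U_h$ against the moment conditions defining $\vpid$ and $\vpig$. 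The first term vanishes by~\eqref{projection_div_1} since $\rho|_K\in P_p(K;\VVV)$; the second because $\dive(\vpid\tau)=\varPi_{p+1}\dive\tau$ (Lemma~\ref{lem:projection_div}) and $w|_K\in P_p(K)\subseteq P_{p+1}(K)$; the third by~\eqref{projection_div_2} since $\hat w|_{\d K}\in\tilde P_{p+1}(\d K)$; the fifth by~\eqref{projection_H1_2_modified} since $\hat\eta|_{\d K}\in P_p(\d K)$; and the fourth by integrating by parts on each $K$, $(\rho,\grad(v-\vpig v))_K=-(\dive\rho,v-\vpig v)_K+\langle\rho\cdot n,v-\vpig v\rangle_{\d K}$, then killing the first part via~\eqref{projection_H1_1_modified} (as $\dive\rho|_K\in P_{p-1}(K)$) and the second via~\eqref{projection_H1_2_modified} (as $\rho\cdot n|_{\d K}\in P_p(\d K)$). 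With~\eqref{eq:pi} in hand, Theorem~\ref{thm:approach} delivers well-posedness and $\|\uu-\uuhr\|_U\le(C_2\cpi/C_1)\inf_{\www\in U_h}\|\uu-\ww\|_U$; unwinding the product norms on $U$ then gives the claimed inequality with $C=C_2\cpi/C_1$.

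Because Lemmas~\ref{lemma_h1}--\ref{lem:projection_div} have already carried out the nontrivial commuting-projection construction (in particular the choice $r=p+N$ that makes the square system for $\vpi_r^0 v$ solvable, and the identity $\dive\vpid\tau=\varPi_{p+1}\dive\tau$), I do not expect a genuine obstacle at this stage; the only delicate point is the bookkeeping in the expansion above. One must keep track of the fact that $\grad(v-\vpig v)$ is merely piecewise $L^2$, that in the boundary terms $\langle\hat w,\tau\cdot n\rangle$ and $\langle v,\hat\eta\rangle$ the $H^{-1/2}$ duality appears on the rough factor while $\vpid\tau\cdot n$ and $\hat\eta|_{\d K}$ are honest polynomials (so that~\eqref{projection_div_2} and~\eqref{projection_H1_2_modified} apply verbatim), and — the point that motivates the boundary moment condition in Lemma~\ref{lem:piG} — that $\rho\cdot n|_{\d K}$ lies in all of $P_p(\d K)$ rather than only in the continuous subspace $\tilde P_p(\d K)$.
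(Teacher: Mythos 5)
Your proof is correct and follows essentially the same route as the paper: it verifies Assumption~\eqref{eq:pi} for $\vpi\vv=(\vpid\tau,\vpig v)$ using Lemmas~\ref{lem:piG} and~\ref{lem:projection_div} and then invokes Theorem~\ref{thm:approach}, with the same five vanishing-term decomposition of $b(\ww,\vv-\vpi\vv)$. The only (immaterial) variation is that you dispatch the term $(w_h,\dive(\tau-\vpid\tau))_\oh$ via the commutativity identity $\dive\vpid\tau=\varPi_{p+1}\dive\tau$ rather than by a direct integration by parts, and your attention to where the $H^{-1/2}$ dualities reduce to $L^2(\d K)$ pairings matches what the paper leaves implicit.
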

\begin{proof}
  As already observed, we have verified the first three assumptions of
  Theorem~\ref{thm:approach}. To verify Assumption~\eqref{eq:pi}, let
  $\vv = (\tau, v)$ and set $\vpi \vv = (\vpid \tau, \vpig v)$. The
  continuity estimates of $\vpid$ and $\vpig$ of Lemmas~\ref{lem:piG}
  and~\ref{lem:projection_div} (namely \eqref{projection_H1_3_modified}
  and~\eqref{projection_div_3}) show that~\eqref{eq:pi-est} holds.  To
  see that~\eqref{eq:pi-iden} also holds, observe that the identities
  of these lemmas also imply
  \begin{align*}
    (\rho_h, \tau - \vpid \tau )_\om & =0, 
    &
    (w_h, \dive (\tau - \vpid\tau))_\oh & =0,
    \\
    \ip{ \hat{w}_h, (\tau - \vpid \tau)\cdot n}_{\d\oh}&=0,
    &
    (\rho_h, \grad (v - \vpig v) )_\oh & = 0,
    \\
    \ip{ v - \vpig v , \hat{\eta}_h}_{\d\oh} & = 0,
  \end{align*}
  for all $(\rho_h,w_h,\hat{w}_h,\hat{\eta}_{h}) \in U_h.$ While the
  identities above on the left follow from the identities
  of~\eqref{eq:projectionGrad} and~\eqref{projection_div}, those on
  the right are proved by integration by parts. Together these
  identities imply that $b( \ww, \vv - \vpi \vv)=0$ for all $\ww \in
  U_h$, so Assumption~\eqref{eq:pi} is satisfied.
\end{proof}

\begin{remark}[Enrichment degree]
  The above arguments point to the potential of choosing different
  enrichment degrees for the scalar and flux components of the test
  space. We have in fact proved that if, in place of the $\Vr$ set
  in~\eqref{eq:VrLaplace}, we revise our choice of $\Vr$ to
  \begin{align*}
    \Vr & = \{ (\tau, v)\in V :\;
    \tau |_{K}\in P_{p+2}(K;\mathbb{V}), \;
    v|_{K}\in P_{p+N}(K), \; \forall K \in \oh\},
  \end{align*}
  then, we obtain the {\em same} convergence result.  Obviously, the
  revised $\Vr$ defines a smaller space if $N\ge 3$. The present DPG
  software packages are set to approximate all components of $T$ by
  polynomials of the same degree~$r$. Our results indicate that this
  is unnecessary.
\end{remark}

As an example of how Theorem~\ref{thm:laplace} implies $h$-convergence
rates, we state the following.

\begin{corollary}[Convergence rates]
  \label{cor:rates}
  Let $h = \max_{K\in\oh} \diam(K)$,  $N=2$ or $3$, 
  and let the assumptions of
  Theorem~\ref{thm:laplace} hold. Then
  \begin{multline*}
    \| \sigma - \sigma_h \|_{L^2(\om)} 
    +
    \| u - u_h \|_{L^2(\om)} 
    + 
    \| \hat u - \hat u_h \|_{H_0^{1/2}(\d\oh)}
    +
    \| \sgn - \sgnh \|_{H^{-1/2}(\d\oh)} 
    \\
    \le \,
    C  h^{s} \big(  \|u \|_{H^{s+1}(\om)} + \| \sigma \|_{H^{s+1}(\om)} \big),
 \end{multline*}
 for all $1/2 <s\le p+1$.
\end{corollary}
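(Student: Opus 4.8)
The plan is to derive Corollary~\ref{cor:rates} directly from the
quasi-optimality estimate of Theorem~\ref{thm:laplace} by producing a
concrete element of $U_h$ whose distance to $\uu$ in the product norm
is $O(h^s)$. Thus the first step is to pick, for each component of
$\uu$, a suitable interpolant or projection into the corresponding
discrete space:
\begin{itemize}
\item for $\sigma$, the componentwise $L^2(K)$-orthogonal projection
  $\rho_h$ onto $P_p(K;\VVV)$,
\item for $u$, the $L^2(K)$-orthogonal projection $w_h$ onto $P_p(K)$,
\item for $\hat u$, the trace of a continuous piecewise-polynomial
  interpolant of $u$ (e.g.\ the Lagrange interpolant into the
  $H_0^1(\om)$-conforming space of degree $p+1$, restricted to element
  boundaries), giving $\hat w_h \in \tilde P_{p+1}(\d K)$,
\item for $\hat\sigma_n$, the face-wise $L^2(F)$-projection of
  $\sigma\cdot n$ onto $P_p(\d K)$, giving $\hat\eta_h$.
\end{itemize}

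The second step is to bound each term. The two volume terms are handled
by the standard Bramble--Hilbert estimate for $L^2$-projections on
shape-regular simplices: $\|\sigma-\rho_h\|_{L^2(\om)} + \|u-w_h\|_{L^2(\om)}
\le C h^{s}\,( \|\sigma\|_{H^{s}(\om)} + \|u\|_{H^{s}(\om)} )$ for
$0<s\le p+1$, which is weaker than the stated right-hand side. The term
$\|\hat u-\hat w_h\|_{H_0^{1/2}(\d\oh)}$ requires more care: by the
definition of the quotient norm~\eqref{eq:quotient-h1}, it is bounded by
$\|u-\tilde u_h\|_{H^1(\om)}$ where $\tilde u_h\in H^1_0(\om)$ is the
global continuous interpolant whose boundary trace is $\hat w_h$; a
standard interpolation estimate then gives $C h^{s}\,\|u\|_{H^{s+1}(\om)}$
for $s>1/2$ (the restriction $s>1/2$ is exactly what is needed for the
Lagrange interpolant, or a Scott--Zhang variant, to be well defined and
bounded). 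Similarly, $\|\hat\sigma_n-\hat\eta_h\|_{H^{-1/2}(\d\oh)}$ is
bounded using~\eqref{eq:quotient-hdiv} by $\|\sigma-\Pi^{\mathrm{RT}}_p\sigma\|_{\Hdiv\om}$,
where $\Pi^{\mathrm{RT}}_p$ is the Raviart--Thomas interpolant, whose
normal trace on each face equals $\hat\eta_h$; the classical commuting-diagram
estimate bounds this by $C h^{s}\,(\|\sigma\|_{H^{s+1}(\om)} + \|\dive\sigma\|_{H^{s}(\om)})$.
Since $\dive\sigma = -f$ and $f=\Delta u$, one has $\|\dive\sigma\|_{H^{s}(\om)}
\le \|u\|_{H^{s+2}(\om)}$, which is not quite what appears in the
statement; more economically one notes $\dive\sigma$ equals the
$L^2$-function $f$, and for the quotient-norm bound it suffices to use
$\|\sigma - \Pi^{\mathrm{RT}}_p\sigma\|_{L^2} + \|\dive\sigma - \Pi_p\dive\sigma\|_{L^2}$,
the second piece being controlled by $h^{s}\|\sigma\|_{H^{s+1}(\om)}$ after
recalling $\dive\sigma\in H^{s}$ follows from $\sigma\in H^{s+1}$.
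Collecting the four bounds and invoking Theorem~\ref{thm:laplace} yields
the claimed estimate, with the infimum replaced by the specific choice
above.

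The third step is purely bookkeeping: absorb all the shape-regularity-
and $p$-dependent constants into a single $C$, note that the upper limit
$s\le p+1$ comes from the polynomial degrees $p$ (trial) and $p+1$
(trace) and the RT degree $p$, and note that the lower limit $s>1/2$ is
forced by the need for $H^{s+1}(\om)\hookrightarrow\mathcal{C}^0$ (or at
least for traces to be defined and the nodal/Scott--Zhang interpolant to
be $H^1$-bounded). The main obstacle I anticipate is the trace term
$\|\hat u-\hat w_h\|_{H_0^{1/2}(\d\oh)}$: one must be careful that the
\emph{global} continuous interpolant lies in $H^1_0(\om)$ (so that it is
an admissible competitor in the quotient-norm infimum) and that its
boundary trace is exactly the chosen $\hat w_h \in \tilde P_{p+1}(\d K)$;
a Scott--Zhang-type operator preserving homogeneous boundary data, rather
than the plain Lagrange interpolant, is the cleanest way to guarantee
both the $H^1$-boundedness down to $s>1/2$ and the boundary-value
matching. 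The analogous subtlety for $\hat\sigma_n$ — ensuring the global
$\Hdiv\om$ field whose normal trace is $\hat\eta_h$ exists and has the
right regularity — is handled automatically by the Raviart--Thomas
interpolant and its commuting property, so that half is routine.
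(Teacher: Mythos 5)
Your proposal is correct and follows essentially the same route as the paper: bound the infimum in Theorem~\ref{thm:laplace} by explicit interpolants, and control the trace and flux terms through the quotient-norm definitions \eqref{eq:quotient-h1}--\eqref{eq:quotient-hdiv} using global conforming interpolants in $H_0^1(\om)$ and $\Hdiv\om$. The only difference is cosmetic -- the paper invokes the projection-based commuting interpolants $\gpi$ and $\dpi$ of Demkowicz--Gopalakrishnan--Sch\"oberl where you use Lagrange/Scott--Zhang and Raviart--Thomas interpolants, which yield the same rates under the same restriction $1/2<s\le p+1$.
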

\begin{proof}
  The proof proceeds by bounding the infimum over
  $(\rho_h,w_h,\hat{w}_h,\hat{\eta}_{h})\in U_h$ in
  Theorem~\ref{thm:laplace}.  It is standard to bound the first two
  terms in the infimum, so we will only explain how to bound the next
  two terms.  It is well-known (see,
  e.g.,~\cite[Theorem~8.1]{DemkoGopalSchob11}) that there are
  interpolants $\gpi u \in H_0^1(\om)$ and $\dpi \sigma \in \Hdiv\om$,
  such that $\gpi u|_K \in P_{p+1}(K)$, $\dpi\sigma|_K \in \vx P_p(K)
  + P_p(K)^3$ for all $K\in\oh$, and the interpolation errors satisfy
  \begin{subequations}
    \label{eq:8}
    \begin{align}
      \| u - \gpi u \|_{H^1(\om)}
      & \le 
      C h^s
      | u |_{H^{s+1}(\om)},
      \qquad (1/2< s\le p+1),
      \\
      \| \sigma - \dpi \sigma \|_{\Hdiv\om}
      & \le 
      Ch^s 
      |\sigma |_{H^{s+1}(\om)},
      \qquad (0<s\le p+1).
    \end{align}
  \end{subequations}
  Let $\hat t_h$ denote the trace of $\gpi u$ on
  $\d\oh$. Then, 
  \begin{align*}
    \inf_{\hat w_h} \| \hat{u} - \hat{w}_h \|_{H_0^{1/2}(\d\oh)} 
    & \le
    \| \hat u - \hat t_h \|_{H_0^{1/2}(\d\oh)}\le  \| u - \gpi u \|_{H^1(\om)}. 
  \end{align*}
  The last inequality is obtained by observing that $\hat u$ is the
  trace of $u$ on $\d\oh$ and bounding the infimum in
  definition~\eqref{eq:quotient-h1}. In a similar fashion, we can
  estimate the last term in Theorem~\ref{thm:laplace} by $\| \sigma -
  \dpi \sigma \|_{\Hdiv\om}.$ The interpolation error
  estimates~\eqref{eq:8} then finish the proof.
\end{proof}

To conclude this section, we prove that the condition number of the
stiffness matrix of the DPG method is no worse than other standard
methods -- see Remark~\ref{rem:cond-abd} for the definition of the
stiffness matrix with respect to a basis $\{\bb_i\}$.  Consider, for
definiteness, the three-dimensional tetrahedral case. We tacitly
assume that the basis functions $\bb_i$ are local, and obtained, as in usual
finite element practice, by mapping from the (reference) unit
simplex.  For example, a basis for the trial space for the
numerical traces is built using a local basis $\{ e_j\}$ for $\tilde
P_{p+1}(\d K)$, which in turn is obtained by mapping over a basis
$\{\hat e_j\}$ for $\tilde P_{p+1}(\d \hat K)$ (where $e_j = \hat e_j
\circ G_K^{-1}$ and we use the other mapping notations in the proof of
Lemma~\ref{lem:projection_div}).  Consequently, if $\hat s = \sum_j
s_j \hat e_j$ is the basis expansion for any $\hat s \in \tilde
P_{p+1}(\d K)$, then by the equivalence of norms in finite dimensional
spaces
 \begin{equation}
   \label{eq:14}
    c \sum_j |s_j|^2 
    \le
    \splitinf{\hat e \in P_{p+1}(\hat K),}{ (\hat e - \hat s)|_{\d\hat K}=0}
    \| \hat e\|_{H^1(\hat K)}^2
    \le C \sum_j |s_j|^2. 
 \end{equation}
 Such arguments will be used in the following proof without
 further explanation.


 \begin{theorem}[Conditioning]
   \label{thm:cond}
   Suppose $\oh$ is a quasiuniform tetrahedral mesh and the
   assumptions of Theorem~\ref{thm:laplace} hold.  Then the spectral
   condition number of the stiffness matrix $S$ of the DPG method
   satisfies
   \[
   \kappa(S) \le C h^{-2}.
   \]
 \end{theorem}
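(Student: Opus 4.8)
The plan is to invoke the abstract conditioning bound~\eqref{eq:kappa} from Remark~\ref{rem:cond-abd}, which reduces the problem to estimating two quantities: the norm-equivalence constants $\lambda_0,\lambda_1$ in~\eqref{eq:lambdas} relating the Euclidean norm of coefficient vectors to the $U$-norm of elements of $U_h$, and the constants $C_1,C_2,\cpi$, which are $h$-independent (the first two are fixed by the continuous problem, and $\cpi$ was shown to be $h$-independent in Lemmas~\ref{lem:piG} and~\ref{lem:projection_div}). Since the mesh is quasiuniform, it suffices to show $\lambda_1/\lambda_0 \le C h^{-2}$, and then~\eqref{eq:kappa} gives $\kappa(S) \le C h^{-2}$ immediately. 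So the whole proof is really the verification of~\eqref{eq:lambdas} with the stated $h$-dependence.

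Next I would estimate $\lambda_0$ and $\lambda_1$ component by component, using the product structure of $\|\cdot\|_U$ and the fact that $U_h$ is a product of local (mapped) polynomial spaces. For the two volume components $\sigma|_K \in P_p(K;\VVV)$ and $u|_K \in P_p(K)$, a standard scaling argument (map to $\hat K$, use equivalence of norms on the fixed finite-dimensional space $P_p(\hat K)$, and track the Jacobian $|K| \sim h^N$) gives $\|\sigma\|_{L^2(K)}^2 \sim h^N \sum |\text{coeffs}|^2$ and likewise for $u$; quasiuniformity makes the implied constants uniform over $K$, and summing over elements gives a clean two-sided bound with ratio $\lambda_1/\lambda_0 = O(1)$ for these components. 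The interesting contributions come from the trace and flux unknowns, whose norms are the quotient norms~\eqref{eq:quotient-h1} and~\eqref{eq:quotient-hdiv} involving a global infimum; here I would argue as in~\eqref{eq:14}, relating the quotient norm to a local mapped minimization over extensions vanishing outside the relevant element patch, then scale to the reference patch. This is where the mismatch in scaling powers between the $L^2$-part and the gradient-part (or divergence-part) of the $H^1$ (resp.\ $H(\dive)$) norm produces the factor $h^{-2}$: on an element of size $h$, the $H^1$-seminorm of the minimal extension of a trace scales like $h^{-1}$ relative to the coefficient $\ell^2$-norm (after normalizing the $L^2$ part), so $\lambda_1 \sim h^{-2}\lambda_0$ in the worst component, which dominates the ratio.

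The main obstacle is handling the quotient norms~\eqref{eq:quotient-h1}--\eqref{eq:quotient-hdiv} carefully: these are defined by a global infimum over $H_0^1(\om)$ (resp.\ $\Hdiv\om$) extensions, not a local one, so one cannot naively localize. The resolution is the two-sided estimate: an upper bound on $\|\hat u\|_{H_0^{1/2}(\d\oh)}$ is obtained by exhibiting a convenient (e.g.\ finite-element) extension supported on a bounded patch of elements and estimating its norm by scaling, giving $\|\hat u\|_{H_0^{1/2}(\d\oh)}^2 \le C h^{-2}\|$coeffs$\|_{\ell^2}^2$ (after suitable normalization of the $\hat u$-basis); a lower bound comes from the trace theorem on a single element, $\|w\|_{H^1(K)} \ge c\, h^{-1/2}\|w|_{\d K}\|_{L^2(\d K)}$ combined with an inverse-type bound, giving $\|\hat u\|_{H_0^{1/2}(\d\oh)}^2 \ge c\,\|$coeffs$\|_{\ell^2}^2$ or perhaps with a benign power of $h$ that still leaves the ratio at $O(h^{-2})$. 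A parallel analysis handles $\hat\sigma_n$ with the $\Hdiv$ quotient norm, where the normal-trace scaling plays the role of the trace scaling. Once these per-component bounds are assembled and the worst powers of $h$ are collected, substituting into~\eqref{eq:kappa} with $\lambda_1/\lambda_0 \le C h^{-2}$ and the $h$-independent $C_1,C_2,\cpi$ completes the proof. I would keep the exposition brief, since each scaling estimate is routine, and simply remark that the $h^{-2}$ is inherited entirely from the quotient (trace/flux) norm blocks, the volume blocks being uniformly well-conditioned.
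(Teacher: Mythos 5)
Your high-level reduction coincides with the paper's: apply \eqref{eq:kappa}, observe that $C_1$, $C_2$, $\cpi$ are $h$-independent, note that the volume blocks of $\|\cdot\|_U$ are uniformly conditioned under quasiuniformity, and trace the $h^{-2}$ entirely to the quotient-norm (trace and flux) blocks. The divergence is in how the quotient norms \eqref{eq:quotient-h1}--\eqref{eq:quotient-hdiv} are localized and, crucially, bounded from \emph{below} by coefficient norms. The paper's key ingredient is the polynomial extension theorems of Demkowicz--Gopalakrishnan--Sch\"oberl: on the reference element the minimal \emph{polynomial} extension of a polynomial trace is comparable to the minimal $H^1$ (resp.\ $H(\mathrm{div})$) extension, which after mapping gives \eqref{eq:6}; the glued elementwise minimizers $\Epgrad\hat z$ and $\Epdiv\hat\eta$ then serve both as conforming competitors for the upper bound and, via \eqref{eq:14} and an inverse inequality, as the bridge to the $\ell^2$-norm of the coefficients, yielding \eqref{eq:15} and \eqref{eq:9}.

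Your substitute for the lower bound has two problems. First, the trace inequality you state, $\|w\|_{H^1(K)}\ge c\,h^{-1/2}\|w|_{\d K}\|_{L^2(\d K)}$, has the wrong sign in the exponent (test it on $w\equiv 1$); the correct scaling is $\|w\|^2_{L^2(\d K)}\le C\,h^{-1}\|w\|^2_{H^1(K)}$, i.e.\ $\|w\|_{H^1(K)}\ge c\,h^{1/2}\|w|_{\d K}\|_{L^2(\d K)}$. Happily, with the corrected exponent this still yields the lower bound $c\,h^N\sum_j|z_j|^2$ for the $H^{1/2}_0$ block, matching $\lambda_0\sim\min_K|K|$, so for the trace variable your elementary route is repairable and genuinely avoids the $H^1$ extension theorem. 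Second, and this is the real gap, the promised ``parallel analysis'' for $\hat\sigma_n$ does not go through: the normal trace of a general $q\in\Hdiv K$ lies only in $H^{-1/2}(\d K)$, and there is no inequality of the form $\|q\cdot n\|_{L^2(\d K)}\le C\,h^{-1/2}\|q\|_{\Hdiv K}$ to play the role of the trace inequality. To bound the infimum in \eqref{eq:quotient-hdiv} from below by the coefficient norm of the polynomial flux you need either a discrete normal-trace lifting with $H^1$-control constructed on the reference element (a finite-dimensional argument you would have to supply explicitly) or the $H(\mathrm{div})$ polynomial extension theorem that the paper invokes for \eqref{eq:9}. As written, the flux block---which contributes to both ends of \eqref{eq:lambdas} and hence to the final $h^{-2}$---is unsupported.
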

 \begin{proof}
   Let us apply~\eqref{eq:kappa}. We have already shown above that
   $C_1, C_2$ and $\cpi$ are independent of $h$. Hence it only
   suffices to find the dependence of $\lambda_0$ and $\lambda_1$ on
   $h$ in~\eqref{eq:lambdas}. 

   Let $\xx = (\rho, w, \hat z, \hat \eta)$ in $U_h$.  As a first step
   to bound the norm of $\hat z$, we recall that the existence of an
   $H^1(\hat K)$ polynomial extension~\cite{DemkoGopalSchob08} implies
   that for any $S$ in $\tilde P_{p+1}(\d \hat K)$,
   \begin{align*}
     \splitinf{\hat e_p \in P_{p+1}(\hat K),}{ \hat e_p|_{\d \hat K} = S}
     \| \hat e_p \|_{\hat K}^2 + \| \grad {\hat e_p} \|_{\hat K}^2 
     \le 
     C \splitinf{\hat e \in H^1(\hat K),}{ \hat e|_{\d \hat{•} K} =S}
       \| \hat e \|_{\hat K}^2 + \| \grad {\hat e} \|_{\hat K}^2. 
  \end{align*}
  Mapping to $K$ using the affine homeomorphism $G_K$
  and scaling both sides by $|K| $, we obtain 
  \begin{align}
    \label{eq:6}
    \splitinf{e_p \in P_{p+1}( K),}{e_p|_{\d  K} =  s  }
     \|  e_p \|_K^2 + h_K^2 \| \grad e_p \|_K^2 
    \le 
    C 
    \splitinf{ e \in H^1( K),}{  e|_{\d  K} =   s }
    \| e \|_K^2 + h_K^2  \| \grad e \|_K^2.
  \end{align}
  where  $s = S \circ G_K^{-1}$.
  Let us denote the function which achieves the left infimum by
  $\Epgrad s$. Applying the above inequality, element by element,
  with $s$ replaced by $\hat z$, and using $h_K \le \diam\om$,
  we have proved that
  \[
  \| \Epgrad \hat z \|_\om^2 \le  C \max(1,\diam\om)^2
  \splitinf{ e \in H^1( \om),}{  (e-\hat z)|_{\d  K} =  0 }
  \left( \| e \|_\om^2 +   \| \grad e \|_\om^2\right)
  \le C  \| \hat z \|_{H_0^{1/2}(\d\oh)}^2.
  \]
  Thus, 
  \[
    c\| \Epgrad \hat z \|_\om^2
    \le \| \hat z \|_{H_0^{1/2}(\d\oh)}^2
    \le
    \| \Epgrad \hat z \|_{H^1(\om)}^2,
  \]
  where the upper inequality is obvious from the definition of the
  $H_0^{1/2}(\d\oh)$-norm.  By an inverse inequality, 
  \begin{equation}
    \label{eq:15}
    c\| \Epgrad \hat z \|_\om^2
    \le \| \hat z \|_{H_0^{1/2}(\d\oh)}^2
    \le
    C h^{-2}\| \Epgrad \hat z \|_{\om}^2.
  \end{equation}

  A similar argument using the $\Hdiv {\hat K}$ polynomial extension
  in~\cite{{DemkoGopalSchob11}}, gives
  \begin{equation}
    \label{eq:9}
    c\| \Epdiv\hat \eta \|_\om^2
    \le \| \hat \eta \|_{H^{-1/2}(\d\oh)}^2
    \le
    Ch^{-2}\| \Epdiv \hat \eta \|_{\om}^2.
  \end{equation}
  Combining~\eqref{eq:15} and~\eqref{eq:9}, we have 
  \begin{equation}
     \label{eq:12}
     c \| \xx \|_0^2 \le \| \xx \|_U^2 \le Ch^{-2} \| \xx \|_0^2,\qquad 
     \forall \xx \in U_h,
  \end{equation}
  where $ \| \xx \|_0^2 = \| \rho \|_\om^2 + \| w \|_\om^2 + \|\Epgrad
  \hat z \|_\om^2 + \| \Epdiv\hat\eta \|_\om^2.$

To prove~\eqref{eq:lambdas}, consider 
the coefficients in 
the basis expansion of $\xx$. If $z_j$'s denote the
coefficients in a basis expansion of the $\hat z|_{\d K}$, then
using~\eqref{eq:14} and the minimization property of $\Epgrad \hat z$,
we obtain
\[
c \sum_j |z_j|^2 \le \frac{1}{|K|}
\left( \| \Epgrad \hat z \|_K^2 + h_K^2 \| \grad \Epgrad \hat z \|_K^2
\right)
\le C \sum_j |z_j|^2.
\]
By an inverse inequality, 
\[
c \sum_j |z_j|^2 \le \frac{1}{|K|}
 \| \Epgrad \hat z \|_K^2
\le C \sum_j |z_j|^2.
\]
A similar estimate holds for $\Epdiv \hat \eta$. Combining these with
the obvious estimates for the coefficients in the expansion of $\rho$
and $w$, we find that
\begin{equation}
  \label{eq:11}
c \| x \|_{\ell^2}^2  \min_{K\in\oh}|K| 
\le \| \xx \|_0^2 
\le 
C \| x \|_{\ell^2}^2  \max_{K\in\oh} |K|.
\end{equation}
Clearly, inequalities~\eqref{eq:11} and~\eqref{eq:12} imply~\eqref{eq:lambdas}
with $ \lambda_0 =c \min_{K\in\oh} |K|$ and $\lambda_1 = C
h^{-2} \max_{K\in\oh} |K|,$ thus completing the proof.
\end{proof}

\section{Second example: Linear elasticity}
\label{sec:elas}

Two ideal DPG methods for the linear elasticity equation were
developed and analyzed in~\cite{BramDemkoGopalQiu:DPGelas}. The two
methods are equivalent for homogeneous isotropic materials.  Among
their many interesting properties is their robustness with respect to
the Poisson ratio, i.e., the method is locking-free. In this section,
we will consider the practical version of one of these two methods and
prove its optimal convergence. We proceed as in the previous example,
by first setting the abstract forms and spaces to those specific to
this method then proceed to verify the hypotheses required to apply
Theorem~\ref{thm:approach}.  In this section, we restrict to $N=2$ or
$3$.  The results and the analysis are similar to those in
Section~\ref{section_Laplace}, so we will be brief.

\subsection{The spaces}

We set the trial and test spaces by
\begin{align*}
& U = L^{2}(\om;\mathbb{M})\times L^{2}(\om;\mathbb{V})\times 
H^{1/2}_{0}(\partial\om_h;\mathbb{V})\times H^{-1/2}(\partial\om_h;\mathbb{V})\times \mathbb{R},\\
& V = H(\text{div},\om_h;\mathbb{S})\times H^{1}(\om_h;\mathbb{V})\times L^{2}(\om ;\mathbb{K}) \times \mathbb{R},
\end{align*}
where $\mathbb{M}=\mathbb{R}^{N\times N}$, $\mathbb{S}$ consists of
symmetric matrices in $\mathbb{M}$, and $\mathbb{K}$ consists of
skew-symmetric matrices in $\mathbb{M}$.  The trial and test spaces
are normed by
\begin{align*}
&\Vert (\sigma,u,\hat{u},\hat{\sigma}_{n},\alpha)\Vert_{U}^{2} = \Vert\sigma\Vert_{\om}^{2}+\Vert u\Vert_{\om}^{2}
+\Vert\hat{u}\Vert_{H^{1/2}(\partial\om_h)}^{2}+\Vert \hat{\sigma}_n\Vert_{H^{-1/2}(\partial\om_h)}^{2}+\vert\alpha\vert^{2},\\
&\Vert (\tau,v,q,\beta)\Vert_{V}^{2}=\Vert\tau\Vert_{H(\text{div},\om_h)}^{2}+\Vert v\Vert_{H^{1}(\om_h)}^{2}+\Vert  q\Vert_{\om}^{2}
+\vert\beta\vert^{2}.\\
\end{align*}

\subsection{The forms}      

The (second) ultraweak formulation derived
in~\cite{BramDemkoGopalQiu:DPGelas} reads as follows: Find $\uu \equiv
(\sigma,u,\hat{u},\hat{\sigma}_{n},\alpha) \in U$
satisfying~\eqref{eq:1} for all $\vv \equiv (\tau,v,q,\beta) \in V$ where
the forms $b(\cdot,\cdot)$ and $l(\cdot)$ are set by
\begin{align*}
  b(\uu,\vv) 
  &
   = 
  (\mathcal{A}\sigma,\tau)_{\om_{h}}+(u,\dive\tau)_{\om_{h}}
  -\langle\hat{u},\tau\,n\rangle_{\partial\om_{h}}+Q_{0}^{-1}(\alpha I,\mathcal{A}\tau)_{\om} \\
  \nonumber
  & \qquad\qquad +(\sigma,\grad  v)_{\om_h}+(\sigma,q)_{\om}
  -\langle v,\hat{\sigma}_{n}\rangle_{\partial\om_{h}} \\
  \nonumber
  & \qquad\qquad +Q_{0}^{-1}(\mathcal{A}\sigma,\beta I)_{\om}
  \\
  \nonumber
  l(\vv) & = (f,v)_{\om},
\end{align*}
for some $f$ in $L^2(\om;\mathbb{V})$.  Here, $\mathcal{A}$ is the
generalized compliance tensor (see e.g.,
\cite[Remark~2.1]{BramDemkoGopalQiu:DPGelas}) and $Q_0$ is the
essential infimum of the trace of the matrix $\mathcal A(x) I$ over
$x\in \om$.  Throughout, we assume that $\mathcal{A}$ is element-wise
constant.  We note that above and throughout, the inner products of
matrix-valued functions, such as $(\sigma, \tau)_K$, are computed by
integrating the Frobenius product of the two matrices.
 
It is easy to see that the resulting $\sigma$ and $u$
satisfies $ \mathcal{A}\sigma = \veps(u)$, where $ \veps(u)= (\grad u + (\grad
u)')/2,$ and $\dive \sigma = f$ on $\om$, and $u=0$ on $\d \om$, and $\alpha = 0$.  For
details, consult~\cite{BramDemkoGopalQiu:DPGelas}.

\subsection{Discrete spaces}      

Symmetric, skew-symmetric, and general matrix-valued functions whose
entries are in $P_p(K)$ are denoted by
$P_{p}(K;\mathbb{S}),P_{p}(K;\mathbb{K})$, and $P_{p}(K;\mathbb{M})$,
resp.  Using these notations, we set the trial approximation space for
the DPG method by
\begin{alignat*}{3}
U_{h}  
& = \{(\sigma,u,\hat{u},\hat{\sigma}_{n},\alpha)\in U 
& : \quad &  \sigma |_{K}\in P_{p}(K;\mathbb{M}), \;
           u|_{K}\in P_{p}(K;\mathbb{V}),
\\
& & & \; \hat{u}|_{\partial K}\in \tilde{P}_{p+1}(\partial K;\mathbb{V}), \; 
       \hat{\sigma}_{n}|_{\partial K}\in P_{p}(\partial K;\mathbb{V}), \;\alpha\in\mathbb{R},  \;\; \forall K \in \oh\}.
\end{alignat*}
The discrete test space is defined by $\Vhr = \Tr(U_h)$, so to
complete the prescription of the practical DPG method, we only need to
specify $\Vr$. Set
\begin{alignat*}{3}
\Vr & = \{ (\tau, v, q, \beta)\in V :
& \quad &
\tau |_{K}\in P_{r}(K;\mathbb{S}), \;
v|_{K}\in P_{r}(K;\mathbb{V}), 
\\
& & & 
q|_{K}\in P_{p}(K;\mathbb{K}), \; \beta\in\mathbb{R}, \; 
\forall K \in \oh\},
\end{alignat*}
for some integer $r\geq p+N$.

\subsection{Verification of the assumptions}

To apply Theorem~\ref{thm:approach} to the above setting, we need to
verify its assumptions.
\begin{itemize}
\item Assumption~\eqref{eq:inj2} 
  is verified   by~\cite[Lemma~5.2]{BramDemkoGopalQiu:DPGelas}.
\item Assumption~\eqref{eq:equiv2}
  is verified   by~\cite[Lemma~5.3]{BramDemkoGopalQiu:DPGelas}.
\item Assumption~\eqref{eq:b-cnt} can be easily verified, as in the
  case of the Laplace equation.
\item Assumption~\eqref{eq:pi} is verified next.

\end{itemize}

Let $\vv = (\tau, v, q, \beta)\in V$. The
operator $\vpi$ satisfying~\eqref{eq:pi} will take the form
\begin{equation}
  \label{eq:Pi-S}
  \vpi \vv = (\vpidS \tau, \vpig v, \varPi_{p}^\KKK q, \beta).
\end{equation}  
We set $\vpig v$ to be the one defined in Lemma~\ref{lem:piG}, but
applied component by component, to the vector valued function~$v$.
The operator $\varPi_{p}^\KKK$ is simply the $L^{2}$-orthogonal projection
onto $\{q\in L^{2}(\om;\mathbb{K}): q|_{K}\in P_{p}(K;\mathbb{K}),
\forall K\in\om_{h}\}$. It remains to construct the operator
$\vpidS$. We do so, based on a set of degrees of freedom given
in~\cite{GopalGuzma10a}, in the next lemma.

\begin{lemma} \label{projection_div_S} 
  There is an operator $\vpidS :
  H(\mathrm{div},K;\mathbb{S})\rightarrow P_{p+2}(K;\mathbb{S})$ such
  that for every $\tau \in H(\mathrm{div},K;\mathbb{S})$, we have
\begin{subequations}
  \label{eq:projection_div_S}
\begin{align}
\label{projection_div_S_1}
(\vpidS\tau,q_{p})_{K}
&  = (\tau,q_{p})_{K}, && \forall q_{p}\in P_{p}(K;\mathbb{S}),\\
\label{projection_div_S_2}
\langle \vpidS\tau\cdot n,\mu_{p+1}\rangle_{\partial K} 
& =
\ip{\mu_{p+1},\tau\cdot n}_{1/2, \d K},
&& \forall \mu_{p+1}\in\tilde{P}_{p+1}(\partial K;\mathbb{V}),\\
\label{projection_div_S_3}
\Vert\vpidS\tau\Vert_{\Hdiv{K}} &  \leq C\Vert\tau\Vert_{\Hdiv{K}}. 
\end{align}
\end{subequations} 
\end{lemma}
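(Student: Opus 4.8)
The plan is to mirror the proof of Lemma~\ref{lem:projection_div} step by step, replacing the Raviart--Thomas space with the symmetric-matrix finite element of~\cite{GopalGuzma10a} and the contravariant Piola transform with its symmetry-preserving analogue. On the unit simplex $\hat K$ I would take the local shape-function space $M_{p+2}(\hat K)\subseteq P_{p+2}(\hat K;\SSS)$ of the Gopalakrishnan--Guzm\'an element, whose degrees of freedom are the interior moments $\hat\tau\mapsto(\hat\tau,\hat q_p)_{\hat K}$, $\hat q_p\in P_p(\hat K;\SSS)$, together with the face moments $\hat\tau\mapsto\langle\hat\tau\hat n,\hat\mu_{p+1}\rangle_F$, $\hat\mu_{p+1}\in P_{p+1}(F;\VVV)$, $F\in\triangle_{N-1}(\hat K)$; these are unisolvent on $M_{p+2}(\hat K)$. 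Letting $P_{p+1}^{\bot}(\partial\hat K;\VVV)$ be the $L^2(\partial\hat K;\VVV)$-orthogonal complement of $\tilde P_{p+1}(\partial\hat K;\VVV)$ in $P_{p+1}(\partial\hat K;\VVV)$, I would set
\[
B_{p+2}^{(\dive,\SSS)}(\hat K)=\{\hat\tau\in M_{p+2}(\hat K):\ \langle\hat p_{\bot},\hat\tau\hat n\rangle_{\partial\hat K}=0,\ \forall\,\hat p_{\bot}\in P_{p+1}^{\bot}(\partial\hat K;\VVV)\}.
\]
Exactly as in Lemma~\ref{lem:projection_div}, the number of the retained degrees of freedom (interior moments against $P_p(\hat K;\SSS)$ and face moments against $\tilde P_{p+1}(\partial\hat K;\VVV)$) equals $\dim B_{p+2}^{(\dive,\SSS)}(\hat K)$, and the unisolvence of the full Gopalakrishnan--Guzm\'an degrees of freedom forces the retained ones to be unisolvent on $B_{p+2}^{(\dive,\SSS)}(\hat K)$. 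Hence there is a unique $\hat\varPi_{p+2}^{(\dive,\SSS)}\hat\tau\in B_{p+2}^{(\dive,\SSS)}(\hat K)$ with $(\hat\varPi_{p+2}^{(\dive,\SSS)}\hat\tau,\hat q_p)_{\hat K}=(\hat\tau,\hat q_p)_{\hat K}$ for all $\hat q_p\in P_p(\hat K;\SSS)$ and $\langle\hat\varPi_{p+2}^{(\dive,\SSS)}\hat\tau\,\hat n,\hat\mu_{p+1}\rangle_{\partial\hat K}=\langle\hat\mu_{p+1},\hat\tau\hat n\rangle_{1/2,\partial\hat K}$ for all $\hat\mu_{p+1}\in\tilde P_{p+1}(\partial\hat K;\VVV)$.

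To transfer to a general simplex $K$, since the output must be symmetric, the contravariant Piola map used for $\vpid$ must be replaced by $\hat\tau\mapsto(\det A)^{-1}A\hat\tau A^t$. With $G_K$ and $A$ as in the proof of Lemma~\ref{lem:projection_div}, I would define $\hat\tau$ by $\tau\circ G_K=(\det A)^{-1}A\hat\tau A^t$ and set $\vpidS\tau(x)=(\det A)^{-1}A\,\hat\varPi_{p+2}^{(\dive,\SSS)}\hat\tau(\hat x)\,A^t$ with $x=G_K(\hat x)$. Because $G_K$ is affine and $\hat\varPi_{p+2}^{(\dive,\SSS)}\hat\tau$ is symmetric of degree $p+2$, this lies in $P_{p+2}(K;\SSS)$. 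The identities~\eqref{projection_div_S_1} and~\eqref{projection_div_S_2} then follow from the two reference identities by the same change of variables as in Lemma~\ref{lem:projection_div}: one uses that $q\mapsto A^t(q\circ G_K)A$ maps $P_p(K;\SSS)$ bijectively onto $P_p(\hat K;\SSS)$ for the first, and, on each face, that $n\,ds$ is a nonzero scalar multiple of $(A^{-t}\hat n)\,d\hat s$ while $\mu\mapsto A^t(\mu\circ G_K)$ maps $\tilde P_{p+1}(\partial K;\VVV)$ bijectively onto $\tilde P_{p+1}(\partial\hat K;\VVV)$ for the second.

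For~\eqref{projection_div_S_3} I would argue in the two steps of Lemma~\ref{lem:projection_div}. Since $A$ is constant, the symmetry-preserving transform enjoys the same shape-regular scaling estimates in the $L^2(K)$- and $\dive$-norms as the contravariant Piola map; combined with the boundedness of $\hat\varPi_{p+2}^{(\dive,\SSS)}$ on $H(\dive,\hat K;\SSS)$, this gives $\Vert\vpidS\tau\Vert_K+h_K\Vert\dive\vpidS\tau\Vert_K\le C(\Vert\tau\Vert_K+h_K\Vert\dive\tau\Vert_K)$, hence in particular $\Vert\vpidS\tau\Vert_K\le C\Vert\tau\Vert_{\Hdiv K}$. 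To sharpen the divergence norm I would show that $\dive(\vpidS\tau)$ equals the $L^2(K;\VVV)$-orthogonal projection of $\dive\tau$ onto $P_{p+1}(K;\VVV)$: as $\vpidS\tau\in P_{p+2}(K;\SSS)$ its divergence lies in $P_{p+1}(K;\VVV)$, and for any $\omega_{p+1}\in P_{p+1}(K;\VVV)$, integration by parts gives $(\dive\vpidS\tau,\omega_{p+1})_K=-(\vpidS\tau,\grad\omega_{p+1})_K+\langle\vpidS\tau\,n,\omega_{p+1}\rangle_{\partial K}$; since $\vpidS\tau$ is symmetric one may replace $\grad\omega_{p+1}$ by $\veps(\omega_{p+1})\in P_p(K;\SSS)$ in the first term, and $\omega_{p+1}|_{\partial K}\in\tilde P_{p+1}(\partial K;\VVV)$, so~\eqref{projection_div_S_1}--\eqref{projection_div_S_2} turn the right-hand side into $-(\tau,\grad\omega_{p+1})_K+\langle\omega_{p+1},\tau n\rangle_{1/2,\partial K}=(\dive\tau,\omega_{p+1})_K$. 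Thus $\Vert\dive\vpidS\tau\Vert_K\le\Vert\dive\tau\Vert_K$, which with the $L^2(K)$ bound gives~\eqref{projection_div_S_3}.

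The only genuinely new point, and the one I expect to take the most care, is the very first step: fixing the shape-function space and degree indexing of the element in~\cite{GopalGuzma10a} so that its degrees of freedom split exactly into interior moments against $P_p(K;\SSS)$ and face moments against $P_{p+1}(F;\VVV)$ with shape functions contained in $P_{p+2}(K;\SSS)$, and confirming their unisolvence. Once that reference element is correctly in hand, the dimension count, the unisolvence of the reduced-trace space $B_{p+2}^{(\dive,\SSS)}(\hat K)$, the symmetry-preserving Piola transfer, and the continuity estimate are all routine transcriptions of the scalar argument in Lemma~\ref{lem:projection_div}.
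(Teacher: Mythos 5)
Your proposal is correct and follows essentially the same route as the paper: restrict the Gopalakrishnan--Guzm\'an degrees of freedom to interior moments against $P_p(\hat K;\SSS)$ and face moments against $\tilde P_{p+1}(\partial\hat K;\VVV)$ on the subspace with vanishing $P_{p+1}^{\bot}$ face moments, transfer by the symmetry-preserving Piola map, and prove the commutativity $\dive\vpidS\tau=\varPi_{p+1}\dive\tau$ by the same $\grad\omega\mapsto\veps(\omega)$ symmetry trick. The one point you flag as open --- pinning down the reference shape-function space --- is exactly what the paper supplies: it is $P_{p+2}^{0}(\hat K;\SSS)$, the symmetric $P_{p+2}$ matrices whose edge moments $\langle\hat s,\hat\tau\hat n^{-}\cdot\hat n^{+}\rangle_{\hat e}$ vanish, with unisolvence quoted from Theorem~2.1 of the cited reference, just as you anticipated.
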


\begin{proof}
  We only give the proof for $N=3$ as the proof for $N=2$ is
  similar.  As in the proof of Lemma~\ref{lem:projection_div}, we will
  first construct the operator on the unit simplex $\hat{K}$ in
  $\mathbb{R}^{N}$.
Define $P_{p+1}^{\bot}(\partial \hat{K};\mathbb{V})
=L^{2}(\partial \hat{K};\VVV)$-orthogonal complement of $\tilde{P}_{p+1}(\partial \hat{K};\mathbb{V})$ 
in $P_{p+1}(\partial \hat{K};\mathbb{V})$ and set
$
P_{p+2}^{0}(\hat{K};\mathbb{S})=\{\hat{\tau}\in P_{p+2}(\hat{K};\mathbb{S}):\;
\langle \hat{s},\hat{\tau}\hat{n}^{-}\cdot \hat{n}^{+}\rangle_{\hat{e}}=0,\;
\forall\hat{s}\in P_{p+2}(\hat{e}),\;\forall \hat{e}\in\triangle_{1}(\hat{K})\}
$
where, for each edge $\hat{e}\in\triangle_{1}(\hat{K})$, $\hat{n}^{+}$
and $\hat{n}^{-}$ are the normal vectors of the two faces sharing
$\hat{e}$.  Let 
$
B_{p+2}^{\mathrm{div}}(\hat{K};\mathbb{S})=\{\hat{\tau}\in P_{p+2}^{0}(\hat{K};\mathbb{S}):
\langle\hat{v},\hat{\tau}\hat{n}\rangle_{\partial\hat{K}}=0$ for all 
$\hat{v}\in P_{p+1}^{\bot}(\partial\hat{K};\mathbb{V})\}.
$
We define $\hat{\varPi}_{p+2}^{(\mathrm{div},\mathbb{S})}
:\;H(\mathrm{div},\hat{K};\mathbb{S}) \mapsto
B_{p+2}^{\mathrm{div}}(\hat{K};\mathbb{S})$ by
\begin{subequations}
\begin{align}
\label{projection_div_S_reference_1}
(\hat{\varPi}_{p+2}^{(\mathrm{div},\mathbb{S})}\hat{\tau},\hat{q}_{p})_{\hat{K}} & = (\hat{\tau},\hat{q}_{p})_{\hat{K}}, &&\forall \hat{q}_{p}
\in P_{p}(\hat{K};\mathbb{S}),\\
\label{projection_div_S_reference_2}
\langle \hat{\varPi}_{p+2}^{(\mathrm{div},\mathbb{S})}\hat{\tau}\cdot \hat{n},\hat{\mu}_{p+1}\rangle_{\partial \hat{K}} & =
\ip{ \hat{\mu}_{p+1}, \hat{\tau}\cdot \hat{n}}_{1/2,\d K},
&&\forall \hat{\mu}_{p+1}\in\tilde{P}_{p+1}(\partial \hat{K};\mathbb{V}).
\end{align}
\end{subequations}
By~\cite[Theorem~2.1]{GopalGuzma10a}, these equations are uniquely
solvable, so $\hat{\varPi}_{p+2}^{(\mathrm{div},\mathbb{S})}$ is well
defined.

Next, we define $\vpidS$ on any general simplex $K$ by mapping
$\hat{\varPi}_{p+2}^{(\mathrm{div},\mathbb{S})}$ from $\hat{K}$ using
the Piola transform for symmetric matrix-valued functions. Recalling
the mapping $G_K$ from $\hat K$ onto $K$ and its derivative $A$, we
define
\begin{equation*}
\vpidS\tau (x) = \dfrac{1}{\det A}A \hat{\varPi}_{p+2}^{(\mathrm{div},\mathbb{S})}\hat{\tau}(\hat{x}) A^{t},
\end{equation*}
for any $\tau \in H(\mathrm{div},K;\mathbb{S})$.  Here, given $\tau$
on $K$, the function $\hat \tau$ on $\hat K$ is defined by $(\det
A)\tau (x) = A\hat{\tau}(\hat{x})A^{t},$ with $x=G_{K}(\hat{x}).$
As in the proof of Lemma~\ref{lem:projection_div}, it is now easy to
see that $\vpidS \tau$ satisfies~\eqref{projection_div_S_1} and ~\eqref{projection_div_S_2}.


Next, we observe that the commutativity property
\begin{equation}
\label{eq:7}
\dive \vpidS \tau = \varPi_{p+1}\dive\tau,
\end{equation}
holds, where $\vpi_{p+1}$ denotes the $L^2(K;\VVV)$-orthogonal 
projection onto $P_{p+1}(K,\VVV)$. Let
$\omega_{p+1}\in P_{p+1}(K;\mathbb{V})$. Then
\begin{align*}
(\dive(\vpidS\tau),\omega_{p+1})_{K} & =-(\vpidS\tau,\grad \omega_{p+1})_{K}
+\langle (\vpidS\tau)\cdot n,\omega_{p+1}\rangle_{\partial K}
\\
 & = -(\vpidS\tau,
\veps( \omega_{p+1}))_{K}
+\langle (\vpidS\tau)\cdot n,\omega_{p+1}\rangle_{\partial K}\\
 & = -(\tau,\veps(\omega_{p+1}))_{K}
+ \ip{\omega_{p+1}, \tau\cdot n}_{1/2,\d K},
\qquad(\text{by }
(\ref{projection_div_S})),\\
 & = -(\tau,\grad \omega_{p+1})_{K}
+ \ip{\omega_{p+1}, \tau\cdot n}_{1/2,\d K},
\\
 & = (\dive\tau,\omega_{p+1})_{K}.
\end{align*}
which proves~\eqref{eq:7}.

It only remains to prove the estimate
of~\eqref{projection_div_S_3}. This can now be done as in the proof of
the estimate~\eqref{projection_div_3} of
Lemma~\ref{lem:projection_div}, in two steps, using~\eqref{eq:7} in
place of the commutativity property used there.
\end{proof}

The main result of this section is the following.

\begin{theorem}
  \label{thm:elasticity}
  Suppose that $r\ge p+N$ and suppose that the compliance tensor
  $\mathcal{A}$ is element-wise constant. Then,
  the difference between the
  discrete solution of the practical DPG method, 
  $ \uuh =
  (\sigma_h,u_h,\hat{u}_h,\hat{\sigma}_{n,h},\alpha_h)$, 
  and the exact solution 
  $ \uu =
  (\sigma,u,\hat{u},\hat{\sigma}_{n},\alpha)$
  satisfies
  \begin{multline*}
    \| \sigma - \sigma _h \|_{L^2(\om)} + 
    \| u - u_h \|_{L^2(\om)} + 
    \| \hat{u} - \hat{u}_h \|_{H_0^{1/2}(\d\oh)} + 
    \| \hat\sigma_n - \hat{\sigma}_{n,h} \|_{H^{-1/2}(\d\oh)}+
    \vert \alpha -\alpha_h \vert
    \\
    \le C
    \inf_{ (\rho_h,w_h,\hat{w}_h,\hat{\eta}_{h}, 
      ) \in U_h }
    \bigg(
      \| \sigma - \rho_h \|_{L^2(\om)} + 
      \| u - w_h \|_{L^2(\om)} 
      \\ + 
      \| \hat{u} - \hat{u}_h \|_{H_0^{1/2}(\d\oh)} + 
      \| \hat\sigma_n - \hat{\sigma}_{n,h} \|_{H^{-1/2}(\d\oh)}
    \bigg).
  \end{multline*}
\end{theorem}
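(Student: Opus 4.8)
The plan is to apply Theorem~\ref{thm:approach} with the abstract spaces and forms set to those of the elasticity ultraweak formulation, exactly as was done for the Laplace equation in Theorem~\ref{thm:laplace}. The first three assumptions of Theorem~\ref{thm:approach} have already been accounted for in the bulleted list preceding Lemma~\ref{projection_div_S}: \eqref{eq:inj2} and \eqref{eq:equiv2} come from~\cite{BramDemkoGopalQiu:DPGelas}, and \eqref{eq:b-cnt} is elementary. So the entire task reduces to verifying Assumption~\eqref{eq:pi}, and then the conclusion~\eqref{eq:quasiopt} of Theorem~\ref{thm:approach}, when written out component-by-component in the product norms on $U$, is precisely the claimed inequality.

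To verify~\eqref{eq:pi}, I would take the operator $\vpi$ to be the one already announced in~\eqref{eq:Pi-S}, namely $\vpi(\tau,v,q,\beta) = (\vpidS\tau, \vpig v, \varPi_p^\KKK q, \beta)$, where $\vpidS$ is from Lemma~\ref{projection_div_S}, $\vpig$ is the Lemma~\ref{lem:piG} operator applied componentwise to the vector-valued $v$, $\varPi_p^\KKK$ is the $L^2$-orthogonal projection onto the (discontinuous) $P_p(\cdot;\KKK)$ space, and the $\RRR$-component is left untouched. First I would check~\eqref{eq:pi-est}: the $V$-norm of $\vpi\vv$ splits across the four factors, and each piece is bounded by the corresponding piece of $\|\vv\|_V$ — the $\tau$-part by~\eqref{projection_div_S_3}, the $v$-part by~\eqref{projection_H1_3_modified} summed over elements, the $q$-part since an $L^2$-projection is a contraction, and the $\beta$-part trivially; adding gives $\cpi$ independent of $h$. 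Then I would verify~\eqref{eq:pi-iden}, i.e. $b(\ww, \vv - \vpi\vv) = 0$ for all $\ww = (\rho_h, w_h, \hat w_h, \hat\eta_h, \alpha_h) \in U_h$. Writing out $b$ term by term, one must kill each of: $(\BAE\rho_h, \tau - \vpidS\tau)_\oh$, $(w_h, \dive(\tau - \vpidS\tau))_\oh$, $\langle \hat w_h, (\tau - \vpidS\tau)n\rangle_{\d\oh}$, $Q_0^{-1}(\alpha_h I, \BAE(\tau - \vpidS\tau))_\om$, $(\rho_h, \grad(v - \vpig v))_\oh$, $(\rho_h, q - \varPi_p^\KKK q)_\om$, $\langle v - \vpig v, \hat\eta_h\rangle_{\d\oh}$, and $Q_0^{-1}(\BAE\rho_h, (\beta - \beta)I)_\om$ (the last vanishes since the $\RRR$-component is preserved). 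The $\vpidS$ orthogonality moments~\eqref{projection_div_S_1}–\eqref{projection_div_S_2} handle the first, third, and fourth of these directly, because $\BAE$ is element-wise constant so $\BAE\rho_h|_K \in P_p(K;\SSS)$ and $\BAE I$ is a constant symmetric matrix, both legitimate test moments; the commutativity~\eqref{eq:7} gives $\dive(\tau - \vpidS\tau) = (I - \varPi_{p+1})\dive\tau \perp P_{p+1} \supseteq w_h$, killing the second. On the $\vpig$ side, $\langle v - \vpig v, \hat\eta_h\rangle_{\d\oh}$ vanishes because $\hat\eta_h|_{\d K} \in P_p(\d K;\VVV)$ is an admissible moment in~\eqref{projection_H1_2_modified}, and $(\rho_h, \grad(v - \vpig v))_\oh = -( \dive\rho_h, v - \vpig v)_\oh + \langle \rho_h n, v - \vpig v\rangle_{\d\oh}$ after integration by parts, where $\dive\rho_h|_K \in P_{p-1}(K;\VVV)$ meets~\eqref{projection_H1_1_modified} and $\rho_h n|_{\d K} \in P_p(\d K;\VVV)$ meets~\eqref{projection_H1_2_modified}. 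Finally $(\rho_h, q - \varPi_p^\KKK q)_\om = (\varPi_p^\KKK \rho_h, q - \varPi_p^\KKK q)_\om = 0$ since only the skew part of $\rho_h$ pairs with the skew-valued $q - \varPi_p^\KKK q$, and the $L^2$-projection error is orthogonal to $P_p(\cdot;\KKK)$. Summing these identities gives~\eqref{eq:pi-iden}.

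With~\eqref{eq:pi} in hand, Theorem~\ref{thm:approach} immediately yields $\|\uu - \uuhr\|_U \le (C_2\cpi/C_1)\inf_{\ww\in U_h}\|\uu - \ww\|_U$; expanding both norms in their product-norm definitions and absorbing the (here $h$-independent) constant into $C$ gives exactly the stated multiline inequality. I expect the main obstacle to be bookkeeping rather than depth: one must be careful that every quantity against which an orthogonality moment of $\vpidS$ or $\vpig$ is invoked genuinely lies in the claimed polynomial space — in particular that the element-wise constancy of $\BAE$ is exactly what is needed for $\BAE\rho_h|_K \in P_p(K;\SSS)$ and for $(\alpha_h I, \BAE\,\cdot\,)$ to reduce to testing against a $P_p(K;\SSS)$ function, and that the trace and normal-trace of the $P_p$ trial fields land in $\tilde P_{p+1}$ or $P_p$ on $\d K$ as required by~\eqref{projection_div_S_2} and~\eqref{projection_H1_2_modified} respectively. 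Since the structure parallels the Laplace proof verbatim save for the extra $q$- and $\beta$-components, the exposition can be kept short, citing Lemma~\ref{lem:piG}, Lemma~\ref{projection_div_S}, and the integration-by-parts identities already spelled out in the proof of Theorem~\ref{thm:laplace}.
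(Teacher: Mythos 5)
Your proposal is correct and follows the paper's proof essentially verbatim: the same operator $\vpi$ from \eqref{eq:Pi-S}, the same eight term-by-term identities establishing \eqref{eq:pi-iden}, and the same appeal to Theorem~\ref{thm:approach}. The only detail you gloss over is that to remove the $|\alpha-\gamma_h|$ contribution from the right-hand infimum one explicitly chooses $\gamma_h=\alpha$ in $U_h$ (as the paper notes at the end of its proof); otherwise the arguments coincide, your use of the commutativity \eqref{eq:7} for the divergence identity being an equivalent substitute for the integration by parts used in the Laplace case.
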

\begin{proof}
  As mentioned above, we only need to verify Assumption~\eqref{eq:pi}
  for the $\vpi$ in~\eqref{eq:Pi-S} and apply
  Theorem~\ref{thm:approach}. By the inequalities of previous lemmas,
  the estimate~\eqref{eq:pi-est} is obvious. To
  prove~\eqref{eq:pi-iden}, namely $b( \ww, \vv - \vpi \vv)=0$ for all
  $\ww \in U_h$, it suffices to prove the following eight identities

\begin{align*}
    (\mathcal{A}\rho_h, \tau - \vpidS \tau )_\om & =0, 
    &
    (w_h, \dive (\tau - \vpidS\tau))_\oh & =0,
    \\
    \ip{ \hat{w}_h, (\tau - \vpidS \tau)\cdot n}_{\d\oh}&=0,
    &
    (\rho_h, \grad (v - \vpig v) )_\oh & = 0,
    \\
    \ip{ v - \vpig v , \hat{\eta}_h}_{\d\oh} & = 0,
    &
    Q_{0}^{-1}(\gamma_h I, \mathcal{A}(\tau - \vpidS\tau))_\om & = 0
    \\
    (\rho_h, q - \varPi_{p}^\KKK q)_\om & = 0,
    &
    Q_{0}^{-1}(\mathcal{A} \rho_h, (\beta-\beta)I)_\om & = 0, 
\end{align*}
for all $\ww\equiv (\rho_h,w_h,\hat{w}_h,\hat{\eta}_{h},\gamma_h) \in
U_h.$ The first five are proved exactly as in the proof of
Theorem~\ref{thm:laplace} but using the new lemma. The sixth is
obvious from~\eqref{projection_div_S_1}. To see the seventh, denoting
by $\skw \rho_h$ the skew-symmetric part of $\rho_h$, observe that
$(\rho_h, q - \varPi_{p}^\KKK q)_\om = (\skw(\rho_h), q -
\varPi_{p}^\KKK q)_\om = 0$, by the definition of $\vpi_p^\KKK$. 

Therefore, applying Theorem~\ref{thm:approach}, we obtain a
quasioptimality estimate. This yields the estimate of the theorem
after observing that in the infimum over $\ww\equiv
(\rho_h,w_h,\hat{w}_h,\hat{\eta}_{h},\gamma_h)$ in $U_h$, we may
choose $\gamma_h = \alpha$.
\end{proof}

We conclude by noting that results similar to
Corollary~\ref{cor:rates} and Theorem~\ref{thm:cond} can be
established for this example as well. The arguments are very similar.

\end{document}